\newcommand{\CM}{{\mathrm{CM}}}
\newcommand{\Aut}{{\mathrm{Aut}\,}}
\newcommand{\Mon}{{\mathrm{Mon}\,}}
\newtheorem{theorem}{Theorem}
\newtheorem{lemma}[theorem]{Lemma}
\newtheorem{proposition}[theorem]{Proposition}
\newtheorem{corollary}[theorem]{Corollary}
\theoremstyle{definition}
\newtheorem{definition}{Definition}
\newtheorem{example}{Example}
\newtheorem{problem}[theorem]{Problem}
\newtheorem{remark}{Remark}
\begin{document}
\title{Complete regular dessins and skew-morphisms of cyclic groups}
\author[1]{Yan-Quan Feng\thanks{yqfeng@bjtu.edu.cn}}
\author[2,3]{Kan Hu\thanks{hukan@zjou.edu.cn}}
\author[4,5]{Roman Nedela\thanks{nedela@savbb.sk}}
\author[6]{Martin \v{S}koviera\thanks{skoviera@dcs.fmph.uniba.sk}}
\author[2,3]{Na-Er Wang\thanks{wangnaer@zjou.edu.cn}}
\affil[1]{Department of Mathematics, Beijing Jiaotong University, Beijing 100044, People's Republic of China}
\affil[2]{School of Mathematics, Physics and Information Science, Zhejiang Ocean University, Zhoushan, Zhejiang 316022, People's Republic of China}
\affil[3]{Key Laboratory of Oceanographic Big Data Mining \& Application of Zhejiang Province, Zhoushan, Zhejiang 316022, People's Republic of China}
\affil[4]{University of West Bohemia, NTIS FAV, Pilsen, Czech Republic}
\affil[5]{Mathematical Institute, Slovak Academy of Sciences, Bansk\'a Bystrica, Slovak Republic}
\affil[6]{Department of Informatics, Comenius University, 842 48 Bratislava, Slovak Republic}
\date{}
\maketitle

\begin{abstract}
A dessin is a 2-cell embedding of a connected $2$-coloured bipartite graph
into an orientable closed surface. A dessin is regular if its group of
orientation- and colour-preserving automorphisms acts regularly
on the edges. In this paper we
study regular dessins whose underlying graph is a complete
bipartite graph $K_{m,n}$, called $(m,n)$-complete regular dessins. The
purpose is to establish a rather surprising correspondence
between $(m,n)$-complete regular dessins and pairs of skew-morphisms of
cyclic groups. A skew-morphism of a finite group $A$ is a
bijection $\varphi\colon A\to A$ that satisfies the identity
$\varphi(xy)=\varphi(x)\varphi^{\pi(x)}(y)$ for some function
$\pi\colon A\to\mathbb{Z}$ and fixes the neutral element
of~$A$. We show that every $(m,n)$-complete regular dessin
$\mathcal{D}$ determines a pair of reciprocal
skew-morphisms of the cyclic groups $\mathbb{Z}_n$ and $\mathbb{Z}_m$.
 Conversely, $\mathcal{D}$ can be reconstructed from such a reciprocal pair.
  As a consequence, we prove that complete regular
dessins, exact bicyclic groups with a distinguished pair of
generators, and pairs of  reciprocal skew-morphisms of cyclic
groups are all in one-to-one correspondence. Finally, we
apply the main result to determining all pairs of integers $m$
and $n$ for which there exists, up to interchange of colours,
exactly one $(m,n)$-complete regular dessin. We show that the
latter occurs precisely when every group expressible as a
product of cyclic groups of order $m$ and $n$ is abelian, which
eventually comes down to the condition
$\gcd(m,\phi(n))=\gcd(\phi(m),n)=1$, where $\phi$ is Euler's
totient function.\\[2mm]
\noindent{\bf Keywords:}~regular dessin, bicyclic group, skew-morphism, graph
embedding \\
\noindent{\bf MSC(2010)} 05E18 (primary), 20B25, 57M15 (secondary)
\end{abstract}

\section{Introduction}
A \textit{dessin} is a cellular embedding $i:\Gamma\hookrightarrow C$ of a connected bipartite graph $\Gamma$, endowed with
a fixed proper $2$-colouring of its vertices, into an orientable closed surface $C$ such that each
component of $C\setminus i(\Gamma)$ is homeomorphic to the open disc.
An automorphism of a dessin is a colour-preserving automorphism of the underlying
graph that extends to an orientation-preserving self-homeomorphism of
the supporting surface. The action of the automorphism group of
a dessin on the edges is well known to be semi-regular; if this
action is transitive, and hence regular, the dessin itself is
called regular.

Dessins -- more precisely \textit{dessins d'enfants} --
were introduced by Grothendieck in \cite{Gro1997} as a
combinatorial counterpart of algebraic curves. Grothendieck was
inspired by a theorem of Bely\v{\i}~\cite{Be79} which states
that a compact Riemann surface~$C$, regarded as a projective
algebraic curve, can be defined by an algebraic equation $P(x,y)=0$ with
coefficients from the algebraic number field $\mathbb{\bar Q}$
if and only if there exists a non-constant meromorphic function
$\beta:C\to\mathbb{P}^1(\mathbb{C})$, branched over at most
three points, which can be chosen to be $0$, $1$, and $\infty$.
It follows that each such curve carries a dessin in which the
black and the white vertices are the preimages of $0$ and $1$,
respectively, and the edges are the preimages of the unit
interval $I=[0,1]$. The absolute Galois group
$\mathbb{G}=\mathrm{Gal}(\mathbb{\bar Q}/\mathbb{Q})$ has a
natural action on the curves and thus also on the dessins. As
was shown by Grothendieck~\cite{Gro1997}, the action of
$\mathbb{G}$ on dessins is faithful. More recently,
Gonz\'alez-Diez and Jaikin-Zapirain~\cite{GJ2015} have proved
that this action remains faithful even when restricted to
regular dessins. It follows that one can study the absolute
Galois group through its action on such simple and symmetrical
combinatorial objects as regular dessins.

In this paper we study regular dessins whose underlying graph
is a complete bipartite graph $K_{m,n}$, which we call
\textit{complete regular dessins}, or more specifically
\textit{$(m,n)$-complete regular dessins}. The associated algebraic curves
may be viewed as a generalisation of the Fermat
curves, defined by the equation $x^n+y^n=1$ (see Lang
\cite{La1972}). These curves have recently attracted
considerable attention, see for example
\cite{CJSW2009,Jones2010,JS1996,JSW2007,JW2016}.
Classification of complete regular dessins is therefore a very
natural problem, interesting from algebraic, combinatorial, and
geometric points of view.

Jones, Nedela, and \v{S}koviera \cite{JNS2008} were first to
observe that there is a correspondence between complete regular
dessins and exact bicyclic groups. Recall that a finite group
$G$ is \textit{bicyclic} if it can be expressed as a product
$G=AB$ of two cyclic subgroups $A$ and~$B$; if $A\cap B=1$, the
bicyclic group is called \textit{exact}. Exact bicyclic groups
are, in turn, closely related to skew-morphisms of the cyclic groups.

A skew-morphism of a finite group $A$
is a bijection $\varphi\colon A\to A$ fixing the identity
element of $A$ and obeying the morphism-type rule
$\varphi(xy)=\varphi(x)\varphi^{\pi(x)}(y)$ for some integer function
$\pi\colon A\to\mathbb{Z}$.  In the case where $\pi$ is the constant function
$\pi(x)=1$, a skew-morphism is just an automorphism. Thus, skew-morphisms
may be viewed as  a generalisation of group automorphisms. The concept of skew-morphism was introduced by Jajcay and \v{S}ir\'a\v{n} as an algebraic tool to the investigation of regular Cayley maps~\cite{JS2002}. In the seminal paper they proved that a Cayley map $\CM(A,X,P)$ is regular if and only if there is a skew-morphism of $A$ such that $\varphi\restriction_X=P$. Thus
the classification of regular Cayley maps of a finite group $A$ is essentially a problem of determining certain skew-morphisms of $A$. Since then, the theory of skew-morphism has become a dispensable and powerful tool for the study of regular Cayley maps; the interested reader is referred to~\cite{CT2014,CJT2016, JN2015,KK2016,KK2017,KK2005,KMM2013, KKF2006, KWON2013, Zhang2015,Zhang20152} for the up-to-date progress in this direction.

The main purpose of this paper is to establish another unexpected surprising
connection between skew-morphisms and complete regular dessins.
As we already mentioned above every $(m,n)$-complete regular dessin can be represented
as an exact bicyclic group $G=\langle a\rangle\langle b\rangle$ with two distinguished generators
$a$ and $b$ of orders $m$ and $n$, respectively. The factorisation gives
rise to a pair of closely related skew-morphisms of cyclic
groups $\varphi\colon \mathbb{Z}_n\to\mathbb{Z}_n$ and $\varphi^*\colon \mathbb{Z}_m\to \mathbb{Z}_m$ which satisfy two simple technical conditions (see
Definition~\ref{def:recipr}); such a pair of skew-morphisms
will be called \textit{reciprocal}. We prove that isomorphic
complete regular dessins give rise to the \textit{same} pair of
reciprocal skew-morphisms, which is a rather remarkable fact,
because every complete regular dessin thus receives a natural
algebraic invariant.

Even more surprising is the fact that
given a  pair of reciprocal skew-morphisms
$\varphi\colon\mathbb{Z}_n\to \mathbb{Z}_n$ and
$\varphi^*\colon\mathbb{Z}_m\to \mathbb{Z}_m$, one can
reconstruct the original complete regular dessin up to
isomorphism. In other words, a  pair of reciprocal
skew-morphisms of the cyclic groups constitutes a complete set of
invariants for a regular dessin whose underlying graph is a
complete bipartite graph. One can therefore study and classify
complete regular dessins by means of determining pairs
of reciprocal skew-morphisms of cyclic groups. This connection
provides the strongest motivation for the prominent open problem of
determining skew-morphisms of the cyclic groups studied in \cite{BJ2014,BJ2016,CJT2016, KN2011,ZD2016}.

The relationship between
complete regular dessins and exact bicyclic groups mentioned
above also has important implications for the
classical classification problem of bicyclic groups in group theory
(see \cite{Huppert1953-1, Huppert1953,Janko2008} for instance). More
precisely, suppose that we are given an exact product $G=AB$ of
two cyclic groups $A$ and $B$ with distinguished generators
$a\in A$ and $b\in B$. The corresponding  pair of reciprocal
skew-morphisms $(\varphi,\varphi^*)$ and associated power functions $(\pi,\pi^*)$
can be alternatively derived from the equations
\[
ba^x=a^{\varphi(x)}b^{\pi(x)}\quad\text{and}\quad ab^y=b^{\varphi^*(y)}a^{\pi^*(y)},
\]
 and thus
encodes the commuting rules within $G$. By our main result,
determining all exact bicyclic groups with a distinguished
generator pair is equivalent to determining all
pairs of reciprocal skew-morphisms. Thus to describe all exact
bicyclic groups it is sufficient to characterise all pairs of
reciprocal skew-morphisms of the cyclic groups.

Our paper is organised as follows. In Section~2 we describe the
basic correspondence between the complete regular dessins and
bicyclic triples $(G;a,b)$, where $G$ is a group which
factorises as $G=\langle a\rangle\langle b\rangle$ with
$\langle a\rangle\cap\langle b\rangle=1$. Given a complete
regular dessin~$\mathcal D$, its automorphism group
$G=\Aut({\mathcal D})$ can be factorised as a product of two disjoint
 cyclic subgroups $\langle a\rangle$ and $\langle b\rangle$ where
 $\langle a\rangle$ is the stabiliser of one black vertex
and $\langle b\rangle$ is the stabiliser of one white vertex.
The triple $(G;a,b)$ is then an exact bicyclic triple.
Conversely, each exact bicyclic triple $(G;a,b)$
determines a complete regular dessin where the elements of $G$
are the edges, the cosets of $\langle a \rangle$ are black
vertices, the cosets of $\langle b\rangle$ are white vertices,
and the local rotations at black and white vertices,
respectively, correspond to the multiplication by $a$ and $b$.

In Section~3 we introduce the concept of reciprocal skew-morphisms,
and prove the main result,
Theorem~\ref{thm:corresp-all}, which establishes the claimed
correspondence between complete regular dessins and
pairs of reciprocal skew-morphisms of the cyclic groups.

An important part of the classification of complete regular
dessins consists of identifying all pairs of integers $m$ and
$n$ for which there exists a unique complete regular dessin up
to isomorphism and interchange of colours. This problem will be
discussed in Section~4. By Theorem~\ref{thm:corresp-all}, we
ask for which integers $m$ and $n$ the only reciprocal pair of
skew-morphisms is the trivial pair formed by the two identity
automorphisms. Yet in other words, we wish to determine all
pairs of integers $m$ and $n$ that give rise to only one exact
product of cyclic groups $\mathbb{Z}_m$ and $\mathbb{Z}_m$,
which necessarily must be the direct product
$\mathbb{Z}_m\times \mathbb{Z}_n$. The answer is given in
Theorem~\ref{the:uniq} which states that all this occurs
precisely when $\gcd(m,\phi(n))=\gcd(\phi(m),n)=1$, where
$\phi$ is the Euler's totient function. This theorem presents
six equivalent conditions one of which corresponds to a recent
result of Fan and Li \cite{FL2017} about the existence of a
unique edge-transitive orientable embedding of a complete
bipartite graph. While the proof in \cite{FL2017} is based on
the structure of exact bicyclic groups, our proof employs the
correspondence theorems established in Section~3.
Theorem~\ref{the:uniq} is a direct generalisation of a result
of Jones, Nedela, and \v Skoviera \cite{JNS2008} where it is
assumed that the complete dessin in question admits an
external symmetry swapping the two partition sets.
Theorem~\ref{the:uniq} generalises the main result of
\cite{FL2017} also by extending it to all products of cyclic
groups rather than just to those where the intersection of
factors is trivial. In particular, we prove that every group
that factorises as a product of two cyclic subgroups of orders $m$ and $n$
is abelian if and only if
$\gcd(m,\phi(n))=\gcd(\phi(m),n)=1$, where $\phi$ is Euler's
totient function. This generalizes an old result due to Burnside
which states that every group of order $n$ is cyclic if and only if
$\gcd(n,\phi(n))=1$~\cite[\S10.1]{Robinson1996}

Finally, in Section~5 we deal with the symmetric case, that is,
the reciprocal skew-morphism pairs of the form
$(\varphi,\varphi)$. In this case, our results are related to
 the classification of orientably regular embeddings of the
complete bipartite graphs $K_{n,n}$ just recently obtained
in a series of papers~\cite{DJKNS2007, DJKNS2010,DJKNS2013, Jones2010,JNS2007,JNS2008, NSZ}.
These maps correspond to the
complete regular dessins admitting an additional external
symmetry swapping the two partition sets.

\section{Complete regular dessins}
It is well known that every dessin, as defined in the previous
section, can be regarded as a two-generator transitive
permutation group acting on a non-empty finite set
\cite{JS1996}. Given a dessin $\mathcal{D}$ on an oriented
surface $C$, we can define two permutations $\rho$ and
$\lambda$ on the edge set of $\mathcal{D}$ as follows: For every
 black vertex $v$ and every white vertex $w$ let $\rho_v$
and $\lambda_w$ be the cyclic permutations of edges incident
with $v$ or $w$, respectively, induced by the orientation of
$C$. Set $\rho=\prod_v \rho_v$ and $\lambda=\prod_w \lambda_w$,
where $v$ and $w$ run through the set of all black and white
vertices, respectively. Since the underlying graph of
$\mathcal{D}$ is connected, the group
$G=\langle\rho,\lambda\rangle$ is transitive. Conversely, given
a transitive permutation group $G=\langle\rho,\lambda\rangle$
acting on a finite set $\Omega$, a dessin $\mathcal{D}$ can be
constructed as follows: Take $\Omega$ to be the edge set of
$\mathcal{D}$, the orbits of $\rho$ to be the black vertices,
and the orbits of $\lambda$ to the white vertices, with
incidence being defined by containment. The vertices and edges
of $\mathcal{D}$ clearly form a bipartite graph $\Gamma$, the
\textit{underlying graph} of $\mathcal{D}$. The underlying
graph is connected, because the action of $G$ on $\Omega$ is
transitive. The cycles of $\rho$ and $\lambda$ determine the
local rotations around black and white vertices, respectively,
thereby giving rise to a $2$-cell embedding of $\Gamma$ into an
oriented surface. Summing up, we can identify a dessin with a
triple $(\Omega;\rho,\lambda)$ where $\Omega$ is a nonempty
finite set, and $\rho$ and $\lambda$ are permutations of
$\Omega$ such that the group $\langle\rho,\lambda\rangle$ is
transitive on $\Omega$; this group is called the
\textit{monodromy group} of $\mathcal{D}$ and is denoted by
$\Mon(\mathcal{D})$.

Two dessins $\mathcal{D}_1=(\Omega_1;\rho_1,\lambda_1)$ and
$\mathcal{D}_2=(\Omega_2;\rho_2,\lambda_2)$ are
\textit{isomorphic} provided that there is a bijection
$\alpha\colon\Omega_1\to\Omega_2$ such that
$\alpha\rho_1=\rho_2\alpha$ and
$\alpha\lambda_1=\lambda_2\alpha$. An isomorphism of a dessin
$\mathcal{D}$ to itself is an \textit{automorphism} of
$\mathcal{D}$. It follows that the automorphism group
$\Aut(\mathcal{D})$ of $\mathcal{D}$ is the centraliser of
$\Mon(\mathcal{D})=\langle \rho,\lambda\rangle$ in the
symmetric group $\mathrm{Sym}(\Omega)$. As $\Mon(\mathcal{D})$
is transitive, $\Aut(\mathcal{D})$ is semi-regular on $\Omega$.
If $\Aut(\mathcal{D})$ is transitive, and hence regular on
$\Omega$, the dessin $\mathcal{D}$ itself is called
\textit{regular}.

Since every regular action of a group on a set is equivalent to
its action on itself by multiplication, every regular dessin
can be identified with a triple $\mathcal{D}=(G; a,b)$ where
$G$ is a finite group generated by two elements $a$ and $b$.
Given such a triple $\mathcal{D}=(G; a,b)$, we can define the
edges of $\mathcal{D}$ to be the elements of $G$, the black
vertices to be the left cosets of the cyclic subgroup $\langle
a \rangle$, and the white vertices to be the left cosets of the
cyclic subgroup $\langle b\rangle$. An edge $g\in G$ joins the
vertices $s\langle a\rangle$ and $t\langle b\rangle$ if and
only if $g\in s\langle a\rangle\cap t\langle b\rangle$. In
particular, the underlying graph is simple if and only if
$\langle a\rangle\cap \langle b\rangle=1$. The local rotation
of edges around a white vertex $s\langle a\rangle$ corresponds
to the right translation by the generator $a$, that is,
$sa^i\mapsto sa^{i+1}$ for any integer $i$. Similarly, the
local rotation of edges around a black vertex $t\langle
b\rangle$ corresponds to the right translation by the generator
$b$, that is, $tb^i\mapsto tb^{i+1}$ for any integer $i$. It
follows that $\Mon(\mathcal{D})$ can be identified with the
group of all right translations of $G$ by the elements of $G$
while $\Aut(\mathcal{D})$ can be identified with the group of
all left translations of $G$ by the elements of $G$. In
particular, $\Mon(\mathcal{D})\cong\Aut(\mathcal{D})\cong G$
for every regular dessin $\mathcal{D}$.

It is easy to see that two regular dessins
$\mathcal{D}_1=(G_1;a_1,b_1)$ and $\mathcal{D}_2=(G_2;a_2,b_2)$
are isomorphic if and only if the triples $(G_1;a_1,b_1)$ and
$(G_2;a_2,b_2)$ are \textit{equivalent}, that is, whenever
there is a group isomorphism $G_1\to G_2$ such that $a_1\mapsto
a_2$ and $b_1\mapsto b_2$. Consequently, for a given
two-generator group $G$, the isomorphism classes of regular
dessins $\mathcal{D}$ with $\Aut(\mathcal{D})\cong G$ are in
one-to-one correspondence with the orbits of the action of
$\Aut(G)$ on the generating pairs $(a,b)$ of $G$.

Following Lando and Zvonkin~\cite{LZ2004}, for a regular dessin
$\mathcal{D}=(G; a,b)$ we define its \textit{reciprocal dessin}
to be the regular dessin $\mathcal{D}^*=(G; b,a)$.
Topologically, $\mathcal{D}^*$ arises from $\mathcal{D}$ simply
by interchanging the vertex colours of $\mathcal{D}$. Thus the
reciprocal dessin has the same underlying graph, the same
supporting surface, and the same automorphism group as the
original one. Clearly, $\mathcal{D}^*$ is isomorphic to
$\mathcal{D}$ if and only if $G$ has an automorphism swapping
the generators $a$ and $b$. If this occurs, the regular dessin
$\mathcal{D}$ will be called \textit{symmetric}. A symmetric
dessin possesses an external symmetry which transposes the
vertex-colours and thus is essentially the same thing as an
orientably regular bipartite map.

In this paper we apply the general theory to regular dessins
whose underlying graph is a complete bipartite graph. A regular
dessin $\mathcal{D}$ will be called an \textit{$(m,n)$-complete
regular dessin}, or simply a \textit{complete regular dessin},
if its underlying graph is the complete bipartite graph
$K_{m,n}$ whose $m$-valent vertices are coloured black and
$n$-valent vertices are coloured white. If $\mathcal{D}$ is an
$(m,n)$-complete regular dessin, then the reciprocal dessin
$\mathcal{D}^*$ is an $(n,m)$-complete regular dessin. Thus all
complete regular dessins appear in reciprocal pairs. Note that
$m=n$ does not necessarily imply that the dessin is symmetric.

Complete regular dessins can be easily described in group
theoretical terms: their automorphism group is just an exact
bicyclic group. This fact was first observed by Jones et al. in
\cite{JNS2008}. A bicyclic group
$G=\langle a\rangle\langle b\rangle$ with $|a|=m$ and $|b|=n$
will be called an
\textit{$(m,n)$-bicyclic group} and $(G;a,b)$ an
\textit{$(m,n)$-bicyclic triple}. Note that an
exact $(m,n)$-bicyclic group has precisely $mn$ elements.

The following result was proved in \cite{JNS2008}.
We include the proof for the reader's convenience.

\begin{theorem}\label{CORR0} 
A regular dessin $\mathcal{D}=(G;a,b)$ is complete if and only if
$G=\langle a\rangle\langle b\rangle$ is an exact bicyclic
group. Furthermore,  the isomorphism classes of
$(m,n)$-complete regular dessins are in a one-to-one
correspondence with the equivalence classes of exact
$(m,n)$-bicyclic triples.
\end{theorem}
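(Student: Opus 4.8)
The plan is to prove both assertions by translating the combinatorial description of the underlying graph of $\mathcal{D}=(G;a,b)$ directly into group-theoretic conditions on the generating pair $(a,b)$, and then to invoke the characterisation of dessin isomorphism already recorded above. The key is to treat the two defining features of $K_{m,n}$ separately: completeness as a bipartite graph, and simplicity.

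For the completeness condition I would start from the observation that an edge $g$ is incident with the black vertex $g\langle a\rangle$ and the white vertex $g\langle b\rangle$, so a black vertex $s\langle a\rangle$ and a white vertex $t\langle b\rangle$ are adjacent precisely when $s\langle a\rangle\cap t\langle b\rangle\neq\emptyset$. A one-line computation shows that this happens exactly when $t^{-1}s\in\langle b\rangle\langle a\rangle$. Since $t^{-1}s$ ranges over all of $G$ as $s$ and $t$ vary, the underlying graph is complete bipartite if and only if $G=\langle b\rangle\langle a\rangle$; and because $(\langle b\rangle\langle a\rangle)^{-1}=\langle a\rangle\langle b\rangle$ while $G=G^{-1}$, this is equivalent to $G=\langle a\rangle\langle b\rangle$, that is, to $G$ being bicyclic. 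Simplicity of the graph is equivalent to $\langle a\rangle\cap\langle b\rangle=1$, as already noted in the excerpt, which is exactly the exactness condition. Hence $\mathcal{D}$ has a simple complete bipartite underlying graph if and only if $(G;a,b)$ is an exact bicyclic triple. To pin down the parameters, note that the edges incident with a black vertex $s\langle a\rangle$ are precisely the elements of the coset $s\langle a\rangle$, so every black vertex has valency $|a|$ and every white vertex valency $|b|$; the convention that the $m$-valent vertices are black then forces $|a|=m$ and $|b|=n$. As a consistency check, $\langle a\rangle\cap\langle b\rangle=1$ gives $|G|=|a|\,|b|=mn$ by the product formula, matching the edge count of $K_{m,n}$.

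For the correspondence I would combine this with the isomorphism criterion established earlier, namely that two regular dessins are isomorphic if and only if their triples are equivalent, i.e.\ some group isomorphism carries one distinguished generating pair onto the other. Every exact bicyclic triple is in particular a two-generator triple (from $G=\langle a\rangle\langle b\rangle$ one gets $G=\langle a,b\rangle$), hence a genuine regular-dessin triple, so the first part identifies the complete regular dessin triples with exactly the exact $(m,n)$-bicyclic triples. Since the assignment $\mathcal{D}\mapsto(G;a,b)$ and its inverse respect both equivalence relations, it descends to the claimed bijection between isomorphism classes of $(m,n)$-complete regular dessins and equivalence classes of exact $(m,n)$-bicyclic triples.

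I expect the only step needing genuine care to be the completeness criterion, and within it the passage between the one-sided products $\langle b\rangle\langle a\rangle$ and $\langle a\rangle\langle b\rangle$ via inversion, together with the fact that adjacency of cosets is governed by nonempty intersection. Everything else is a routine dictionary translating the graph-theoretic data of the dessin into the coset geometry of $G$.
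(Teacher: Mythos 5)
Your proposal is correct, and its core mechanism differs from the paper's in an interesting way. The paper proves the first assertion by counting: in the forward direction it reads off $|G|=mn$, $|a|=m$, $|b|=n$ from the graph $K_{m,n}$, gets exactness from simplicity, and then invokes the product formula $|\langle a\rangle\langle b\rangle|=|a|\,|b|/|\langle a\rangle\cap\langle b\rangle|=mn=|G|$ to conclude $G=\langle a\rangle\langle b\rangle$; in the converse direction it cites the general theory (connected, simple, $mn$ edges, black vertices $m$-valent, white vertices $n$-valent) and concludes, by an implicit degree-counting step, that the graph must be $K_{m,n}$. You instead prove a single two-way criterion: adjacency of $s\langle a\rangle$ and $t\langle b\rangle$ is equivalent to $t^{-1}s\in\langle b\rangle\langle a\rangle$, so completeness of the bipartite graph is equivalent to $G=\langle b\rangle\langle a\rangle$, which by inversion equals the condition $G=\langle a\rangle\langle b\rangle$; simplicity separately matches exactness. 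This coset-geometric argument buys a genuinely biconditional proof in one stroke, cleanly decouples the two features of $K_{m,n}$ (completeness versus simplicity), and makes explicit the step the paper leaves implicit in its converse; the paper's counting argument, by contrast, is shorter because it reuses the valency and edge-count facts already set up in Section~2. Your treatment of the parameters ($|a|=m$, $|b|=n$ from valencies), of two-generation ($G=\langle a\rangle\langle b\rangle\Rightarrow G=\langle a,b\rangle$), and of the isomorphism correspondence (via equivalence of triples) coincides with the paper's.
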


\begin{proof}
Let $\mathcal{D}=(G;a,b)$ be a regular dessin with
underlying graph $K_{m,n}$. Then $|G|=mn$, $|a|=m$, and
$|b|=n$. Since the underlying graph is simple, $\langle
a\rangle\cap\langle b\rangle=1$. It follows that $|\langle
a\rangle\langle b\rangle|=|\langle a\rangle||\langle
b\rangle|/|\langle a\rangle\cap\langle b\rangle|=mn$, whence
$G=\langle a\rangle\langle b\rangle$. Therefore, $(G;a,b)$ is
an exact $(m,n)$-bicyclic triple.

For the converse, let $(G;a,b)$ be an exact $(m,n)$-bicyclic
triple. The general theory explained above implies that the
underlying graph is connected, with $mn$ edges, the black
vertices being $m$-valent and white vertices being $n$-valent.
Since $\langle a\rangle\cap\langle b\rangle=1$, the underlying
graph is simple, and hence it must be the complete bipartite
graph $K_{m,n}$.

Finally, two $(m,n)$-complete regular dessins
$\mathcal{D}_1=(G_1;a_1,b_1)$ and $\mathcal{D}_1=(G_2;a_2,b_2)$
are isomorphic if and only if there is an isomorphism from
$G_1$ onto $G_2$ taking $a_1\mapsto a_2$ and $b_1\mapsto b_2$,
that is, if and only if the triples $(G_1;a_1,b_1)$ and
$(G_2;a_2,b_2)$ are equivalent.
\end{proof}

\begin{example}\label{EXP1}
For each pair of positive integers $m$ and $n$ there is an exact
bicyclic triple $(G;a,b)$ where
\[
G=\langle a,b\mid a^m=b^n=[a,b]=1\rangle
=\langle a\rangle\times \langle b\rangle
 \cong\mathbb{Z}_m\times\mathbb{Z}_n,
 \]
with $[a,b]$ denoting the commutator $a^{-1}b^{-1}ab$. It is
easy to see that this triple is uniquely determined by the
group $\mathbb{Z}_m\times\mathbb{Z}_n$ up to order of
generators and equivalence, so up to reciprocality this group
gives rise to a unique complete regular dessin with underlying
graph $K_{m,n}$. We call this dessin the \textit{standard}
$(m,n)$-complete dessin. If $m=n$, the group $G$ has an
automorphism transposing $a$ and $b$, which implies that in
this case the dessin is symmetric. The corresponding
embedding is the \textit{standard embedding} of $K_{n,n}$
described in ~\cite[Example~1]{JNS2008}. The associated
algebraic curves coincide with the Fermat curves.
\end{example}

\section{Reciprocal skew-morphisms}

In this section we establish a correspondence between exact
bicyclic triples and certain pairs of skew-morphisms of cyclic
groups.

Recall that a \textit{skew-morphism} $\varphi$ of a
finite group $A$ is a bijection $A\to A$ fixing the identity of
$A$ for which there exists an associated power function
$\pi\colon A\to\mathbb{Z}$ such that
\[
\varphi(xy)=\varphi(x)\varphi^{\pi(x)}(y)
\]
for all $x,y\in A$. It may be useful to realise that $\pi$ is
not uniquely determined by~$\varphi$. However, if $\varphi$ has
order $m$, then $\pi$ can be regarded as a function
$A\to\mathbb{Z}_m$, which is unique. In the special case where
 $\pi(x)=1$ for all $x\in A$, $\varphi$ is a group
automorphism; conversely, every group automorphism is a
skew-morphism with power function being the constant function
$\pi(x)=1$.

Skew-morphisms have a number of important properties, sometimes
very different from those of group automorphisms. In our
treatment we restrict ourselves to a few basic properties of
skew-morphisms needed in this paper. For a deeper account
we refer the reader to \cite{CJT2016, JS2002, KN2011, HWYZ2017,
ZD2016}.

The next three properties of skew-morphisms are well known and
were proved in \cite{JN2015,JS2002,HWYZ2017}. For convenience,
we include the proof of the third of them.

\begin{lemma}\label{SKEW}
Let $\varphi$ be a skew-morphism of a finite group $A$ with
associated power function~$\pi$.  Let $n$ be the order of
$\varphi$. Then:
\begin{itemize}
\item[\rm(i)] for any two elements $x,y\in A$ and an
    arbitrary positive integer $k$ one has
$$\varphi^k(xy)=\varphi^k(x)\varphi^{\sigma(x,k)}(y) \quad
   \text{ where }\quad\sigma(x,k)=\sum\limits_{i=1}^k\pi(\varphi^{i-1}(x));$$

\item[\rm(ii)] for every element $x\in A$ one has
    $\mathcal{O}_{x^{-1}}=\mathcal{O}_x^{-1}$, where
    $\mathcal{O}_x$ denotes the orbit of $\varphi$ containing $x$;

\item[\rm(iii)] for every $x\in A$ one has $\sigma(x,n)\equiv0\pmod{n}$.
\end{itemize}
\end{lemma}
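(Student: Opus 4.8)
The plan is to obtain part (iii) directly from part (i), which already describes how the $n$-th iterate $\varphi^n$ acts on a product. The guiding idea is that $\sigma(x,n)$ is exactly the exponent produced when one applies $\varphi^n$ to a product $xy$; since $n$ is the order of $\varphi$, the map $\varphi^n$ collapses to the identity, and comparing the two sides of the resulting identity should force $\sigma(x,n)$ to be a multiple of $n$.

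First I would apply part (i) with $k=n$ to an arbitrary product $xy$, which yields
$$\varphi^n(xy)=\varphi^n(x)\,\varphi^{\sigma(x,n)}(y).$$
Because $\varphi$ has order $n$, the iterate $\varphi^n$ is the identity permutation of $A$, so the left-hand side equals $xy$ while $\varphi^n(x)=x$ on the right. Substituting these and left-cancelling the factor $x$ gives
$$\varphi^{\sigma(x,n)}(y)=y$$
for every $y\in A$.

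This last identity says that the permutation $\varphi^{\sigma(x,n)}$ fixes every element of $A$, hence is the identity map of $A$. Since $n$ is by definition the order of $\varphi$, a power $\varphi^t$ equals the identity if and only if $n\mid t$; applying this with $t=\sigma(x,n)$ yields $\sigma(x,n)\equiv 0\pmod{n}$, and as $x$ was arbitrary the claim holds for all $x\in A$.

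Honestly, there is no serious obstacle in this argument: it is a one-line cancellation once part (i) is available. The only point that deserves care is that $\sigma(x,n)$ is a priori an arbitrary integer (the power function $\pi$ takes values in $\mathbb{Z}$ and need not be positive), so one must read $\varphi^{\sigma(x,n)}$ as a genuine integer iterate of the permutation $\varphi$ before invoking the order of $\varphi$ to reduce the exponent modulo $n$. The equivalence $\varphi^t=\mathrm{id}\iff n\mid t$ holds for all integers $t$, so the conclusion is unaffected by the sign of $\sigma(x,n)$.
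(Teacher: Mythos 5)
Your proof is correct, but it follows a genuinely different route from the paper's. You apply (i) with $k=n$ to an arbitrary product $xy$ and exploit $\varphi^n=\mathrm{id}$: after cancelling $x$ you conclude that $\varphi^{\sigma(x,n)}$ fixes every $y\in A$ (crucially, $\sigma(x,n)$ does not depend on $y$), hence is the identity permutation, so $n\mid\sigma(x,n)$; part (ii) is never needed. The paper instead argues at the level of a single orbit: setting $m=|\mathcal{O}_x|$, it applies (i) with $k=m$ to the particular product $xx^{-1}=1$, obtaining $\varphi^{\sigma(x,m)}(x^{-1})=x^{-1}$; it then invokes (ii) to know that the orbit of $x^{-1}$ has length exactly $m$, which forces $\sigma(x,m)\equiv 0\pmod{m}$, and finally uses the periodicity of $i\mapsto\pi(\varphi^{i-1}(x))$ (period $m$, since $\varphi^m(x)=x$) to write $\sigma(x,n)=k\,\sigma(x,m)$ with $n=mk$, which is therefore divisible by $n$. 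Your argument is shorter and more self-contained, needing only (i) and the definition of the order of a permutation; the paper's argument costs a little more but yields the finer, orbit-level divisibility $\sigma(x,|\mathcal{O}_x|)\equiv 0\pmod{|\mathcal{O}_x|}$, which is strictly stronger than (iii) and is the kind of local fact that is frequently useful in skew-morphism computations. Your closing caution about reading $\varphi^{\sigma(x,n)}$ as an integer iterate (since $\pi$ is $\mathbb{Z}$-valued and $\sigma(x,n)$ could in principle be negative) is exactly the right point to flag, and it does not affect the conclusion.
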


\begin{proof}
We prove (iii). For any element $x\in A$, let
$m=|\mathcal{O}_x|$ be the length of the orbit of $\varphi$
containing $x$. Then $n=mk$ for some integer $k$. By (i) we
have
\[
1=\varphi^m(xx^{-1})=\varphi^m(x)\varphi^{\sigma(x,m)}(x^{-1})
=x\varphi^{\sigma(x,m)}(x^{-1}).
\]
By (ii), $|\mathcal{O}_{x^{-1}}|=|\mathcal{O}_x|=m$,
so $\sigma(x,m)\equiv0\pmod{m}$. If follows that
\[
\sigma(x,n)=\sum_{i=1}^n\pi(\varphi^{i-1}(x))
=k\sum_{i=1}^m\pi(\varphi^{i-1}(x))
=k\sigma(x,m)\equiv0\pmod{n},
\]
as required.
\end{proof}

Let $G$ be a finite group which is expressible as a product
$AB$ of two subgroups $A$ and $B$ where $B$ is cyclic and
$A\cap B=1$; in this situation we say that $B$ is a
\textit{cyclic complement} of $A$.  Choose a generator $b$ of
$B$. Since $G=AB=BA$, for every element $x\in A$ we can write
the product $bx$ in the form $yb^k$, so
$$bx=yb^k$$
for some $y\in A$ and $k\in\mathbb{Z}_{|b|}$. Observe that both
$y\in A$ and $k\in\mathbb{Z}_{|b|}$ are uniquely determined by
$x$. Therefore we can define the functions $\varphi\colon A\to
A$ and $\pi:A\to\mathbb{Z}_{|b|}$ by setting
\begin{align}\label{eq:induced}
\varphi(x)=y\quad\text{and}\quad \pi(x)=k.
\end{align}
It is not difficult to verify that $\varphi$ is a skew-morphism
of $A$ and $\pi$ is an associated power function (see, for
example \cite[p.~73]{CJT2016}). We call $\varphi$ the
skew-morphism \textit{induced} by $b$. The order $|\varphi|$ of
this skew-morphism was determined in \cite[Lemma~4.1]{CJT2016}
and equals the index $|\langle b\rangle:\langle b\rangle_G|$
where $\langle b\rangle_G=\cap_{g\in G}\langle b\rangle^g$. It
follows that the power function $\pi$ can be further reduced to
a function $A\to\mathbb{Z}_{|\varphi|}$, still denoted by
$\pi$.

We now focus on the case where the subgroup $A$ is also cyclic,
which means that $G$ is an exact bicyclic group. The
subgroups $A$ and $B$ can now be taken as cyclic complements of
each other. Therefore every generator $a$ of $A$ induces a
skew-morphism of $B$ and every generator $b$ of $B$ induces a
skew-morphism of $A$. In other words, every exact bicyclic
triple $(G;a,b)$ gives rise to a pair of skew-morphisms, one
for each of the two cyclic subgroups.

We now describe these two skew-morphisms more precisely. Let
$G=\langle a\rangle\langle b\rangle$ be an exact bicyclic group
where $\langle a\rangle\cong\mathbb{Z}_m$ and $\langle
b\rangle\cong\mathbb{Z}_n$. Then for every $x\in\mathbb{Z}_n$
there exist unique elements $i\in\mathbb{Z}_n$ and
$j\in\mathbb{Z}_m$ such that $ab^x=b^ia^j$.
In accordance with (\ref{eq:induced}) we define the functions
$\varphi\colon\mathbb{Z}_n\to\mathbb{Z}_n$ and
$\pi\colon\mathbb{Z}_n\to\mathbb{Z}_m$ by
\begin{align}\label{REC1}
\varphi(x)=i\quad\text{and}\quad \pi(x)=j.
\end{align}
Similarly, each $y\in\mathbb{Z}_m$ determines a unique pair of
elements $k\in\mathbb{Z}_m$ and $l\in\mathbb{Z}_n$  such that
$ba^y=a^kb^l$. Define the functions
${\varphi^*}\colon\mathbb{Z}_m\to\mathbb{Z}_m$ and
${\pi^*}\colon\mathbb{Z}_m\to\mathbb{Z}_n$  to be
\begin{align}\label{REC2}
{\varphi^*}(y)=k\quad\text{and}\quad{\pi^*}(y)=l.
\end{align}

Next we show that the mappings $\varphi$ and ${\varphi^*}$
defined by (\ref{REC1}) and (\ref{REC2}) are indeed
skew-morphisms of $\mathbb{Z}_n$ and $\mathbb{Z}_m$ with power
functions $\pi$ and ${\pi^*}$, respectively. We will further
show that this pair of skew-morphisms can be characterised by
two simple properties. For this purpose, we need the following
definition.

\begin{definition}\label{def:recipr}
A pair $(\varphi,{\varphi^*})$ of skew-morphisms
$\varphi\colon\mathbb{Z}_n\to\mathbb{Z}_n$ and
${\varphi^*}\colon\mathbb{Z}_m\to\mathbb{Z}_m$ with power
functions $\pi$ and $\pi^*$, respectively, will be called
\textit{$(m,n)$-reciprocal} if the following two conditions are
satisfied:
\begin{itemize}
\item[\rm(i)] the orders of $\varphi$ and ${\varphi^*}$ divide
    $m$ and $n$, respectively,
\item[\rm(ii)] $\pi(x)=-{\varphi^*}^{-x}(-1)$ and
    ${\pi^*}(y)=-\varphi^{-y}(-1)$ are power functions for
    $\varphi$ and ${\varphi^*}$, respectively.
\end{itemize}
\end{definition}

If $m=n$ and $(\varphi,{\varphi^*})$ is an $(n,n)$-reciprocal
pair of skew-morphisms it may, but need not, happen that
$\varphi={\varphi^*}$. If it does, then the pair
$(\varphi,\varphi)$, as well as the skew-morphism $\varphi$
itself, will be called \textit{symmetric}. Note that a
skew-morphism $\varphi$ of $\mathbb{Z}_n$ is symmetric if and
only if $|\varphi|$ divides $n$ and $\pi(x)=-\varphi^{-x}(-1)$
is a power function of $\varphi$.

\begin{proposition}\label{prop:bic2recip}
Let $(G;a,b)$ be an exact $(m,n)$-bicyclic triple, and let
$\varphi:\mathbb{Z}_n\to\mathbb{Z}_n$ and
${\varphi^*}:\mathbb{Z}_m\to\mathbb{Z}_m$ be mappings defined
by \eqref{REC1} and \eqref{REC2}, respectively. Then
$(\varphi,{\varphi^*})$ is an $(m,n)$-reciprocal pair of
skew-morphisms. In particular, if $G$ has an automorphism
transposing $a$ and $b$, then $\varphi={\varphi^*}$ and the
pair is symmetric.
\end{proposition}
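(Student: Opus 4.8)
The plan is to verify the three requirements packaged in Definition~\ref{def:recipr} one at a time, extracting everything from the two defining relations
\[
ab^x = b^{\varphi(x)}a^{\pi(x)}\quad\text{and}\quad ba^y = a^{\varphi^*(y)}b^{\pi^*(y)}
\]
of \eqref{REC1} and \eqref{REC2}, together with the theory of induced skew-morphisms already developed in this section. To begin with, the skew-morphism property itself and condition~(i) come almost for free. Since $G=\langle a\rangle\langle b\rangle$ is exact, the subgroups $\langle a\rangle$ and $\langle b\rangle$ are cyclic complements of each other, so $\varphi$ is exactly the skew-morphism of $\langle b\rangle\cong\mathbb{Z}_n$ induced by the generator $a$ in the sense of \eqref{eq:induced}, with power function $\pi$, and dually $\varphi^*$ is the skew-morphism of $\langle a\rangle\cong\mathbb{Z}_m$ induced by $b$, with power function $\pi^*$. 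That induced maps are genuine skew-morphisms is the content of the general construction (cf.\ \cite[p.~73]{CJT2016}). Moreover, by the order formula of \cite[Lemma~4.1]{CJT2016} the order of $\varphi$ equals $|\langle a\rangle:\langle a\rangle_G|$ and hence divides $m=|a|$, while the order of $\varphi^*$ divides $n=|b|$; this is precisely condition~(i).

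The heart of the argument is condition~(ii). First I would iterate the relation defining $\varphi$ to obtain, for every positive integer $k$,
\[
a^k b^x = b^{\varphi^k(x)}\,a^{\sigma(x,k)},
\]
where $\sigma(x,k)=\sum_{i=1}^k\pi(\varphi^{i-1}(x))$ as in Lemma~\ref{SKEW}(i); this is a routine induction on $k$. Because $\varphi$ has finite order dividing $m$ and $a^m=1$, the identity extends to all integers $k$, the consistency across multiples of $|\varphi|$ being exactly what Lemma~\ref{SKEW}(iii) guarantees. Taking inverses gives $b^{-x}a^{-k}=a^{-\sigma(x,k)}b^{-\varphi^k(x)}$; specialising to $x=-1$ and replacing $k$ by $-y$ yields
\[
ba^y = a^{-\sigma(-1,-y)}\,b^{-\varphi^{-y}(-1)}.
\]
Comparing this with $ba^y=a^{\varphi^*(y)}b^{\pi^*(y)}$ and using the uniqueness of the expression of each group element in the form $a^i b^j$ (valid because the factorisation $G=\langle a\rangle\langle b\rangle$ is exact), I read off both the desired formula $\pi^*(y)=-\varphi^{-y}(-1)$ and, as a consistency byproduct, $\varphi^*(y)=-\sigma(-1,-y)$. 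The companion identity $\pi(x)=-{\varphi^*}^{-x}(-1)$ then follows by applying the identical computation to the reciprocal triple $(G;b,a)$, for which the induced pair is $(\varphi^*,\varphi)$ with the roles of the two skew-morphisms interchanged. Together these establish condition~(ii), so $(\varphi,\varphi^*)$ is $(m,n)$-reciprocal.

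The step I expect to require the most care is the passage from positive to arbitrary integer exponents in $a^k b^x = b^{\varphi^k(x)}a^{\sigma(x,k)}$, and the accompanying bookkeeping of $\sigma(x,k)$ modulo the relevant orders: one must check that $\sigma(x,k)$ is well defined as an element of $\mathbb{Z}_m$ and transforms correctly under $k\mapsto k+|\varphi|$, which is precisely where Lemma~\ref{SKEW}(iii) is indispensable. Finally, for the symmetric case, suppose $\theta\in\Aut(G)$ transposes $a$ and $b$, forcing $m=n$. Applying $\theta$ to $ab^x=b^{\varphi(x)}a^{\pi(x)}$ produces $ba^x=a^{\varphi(x)}b^{\pi(x)}$, and comparing with $ba^x=a^{\varphi^*(x)}b^{\pi^*(x)}$ while again invoking uniqueness of the factorisation forces $\varphi^*=\varphi$ (and $\pi^*=\pi$). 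Hence the pair is symmetric, completing the proof.
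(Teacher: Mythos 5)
Your proposal is correct and follows essentially the same route as the paper's proof: the skew-morphism property and condition (i) via the induced-skew-morphism framework and \cite[Lemma~4.1]{CJT2016} (the paper verifies bijectivity, $\varphi(0)=0$ and the skew identity directly, but its own preamble sanctions your citation), condition (ii) by iterating and inverting the commuting rules and comparing normal forms via exactness (your passage to the reciprocal triple $(G;b,a)$ is an equivalent shortcut for the paper's ``similarly'' computation), and the symmetric case by applying $\theta$ to one commuting rule. One minor correction to your ``care'' paragraph: Lemma~\ref{SKEW}(iii) only yields $\sigma(x,|\varphi|)\equiv 0\pmod{|\varphi|}$, which controls exponents modulo $|\varphi|$ rather than modulo $m$, so it is neither needed nor quite sufficient for the extension to negative exponents; that extension is justified, exactly as in the paper, by formally inverting the positive-exponent identities (equivalently, $a^m=1$ together with exactness forces $\sigma(x,m)\equiv 0\pmod m$), and your actual derivation of $\pi^*(y)=-\varphi^{-y}(-1)$ uses only this and the unambiguous meaning of $\varphi^{-y}$.
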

\begin{proof}
We first show that $\varphi$ is a permutation of $\mathbb{Z}_n$
that fixes the identity element $0$. For any two elements
$x_1,x_2\in\mathbb{Z}_n$ we have
$ab^{x_1}=b^{\varphi(x_1)}a^{\pi(x_1)}$ and
$ab^{x_2}=b^{\varphi(x_2)}a^{\pi(x_2)}$, or equivalently
$b^{x_1}a^{-\pi(x_1)}=a^{-1}b^{\varphi(x_1)}$ and
$b^{x_2}a^{-\pi(x_2)}=a^{-1}b^{\varphi(x_2)}$. If
$\varphi(x_1)=\varphi(x_2)$, then
$b^{x_1}a^{-\pi(x_1)}=b^{x_2}a^{-\pi(x_2)}$, so
$b^{x_1-x_2}=a^{\pi(x_1)-\pi(x_2)}$. Since $\langle
a\rangle\cap\langle b\rangle=1$, we get $x_1= x_2$, which means that
$\varphi$ is a bijection $\mathbb{Z}_n\to \mathbb{Z}_n$. Note that
$a=ab^0=b^{\varphi(0)}a^{\pi(0)}$, so
$b^{\varphi(0)}=a^{1-\pi(0)}\in\langle a\rangle\cap\langle
b\rangle$. Thus $\varphi(0)=0$.

From the commuting rules $ab^{x_1}=b^{\varphi(x_1)}a^{\pi(x_1)}$ and
$ab^{x_2}=b^{\varphi(x_2)}a^{\pi(x_2)}$ we derive,
for all $x_1,x_2\in\mathbb{Z}_n$, that
 \[
 b^{\varphi(x_1+x_2)}a^{\pi(x_1+x_2)}=ab^{x_1+x_2}
 =b^{\varphi(x_1)}a^{\pi(x_1)}b^{x_2}
 =b^{\varphi(x_1)+\varphi^{\pi(x_1)}(x_2)}a^{\sigma(x_2,\pi(x_1))},
 \]
where
$\sigma(x_2,\pi(x_1))=\sum\limits_{i=1}^{\pi(x_1)}\pi(\varphi^{i-1}(x_2)).$
Consequently
$\varphi(x_1+x_2)=\varphi(x_1)+\varphi^{\pi(x_1)}(x_2)$, and
therefore $\varphi$ is a skew-morphism of $\mathbb{Z}_n$ with
$\pi$ as a power function. The proof that ${\varphi^*}$ is a
skew-morphism of $\mathbb{Z}_m$ with ${\pi^*}$ as a power
function is similar and therefore omitted.

As mentioned above, in~\cite[Lemma 4.1]{CJT2016} it was proved
that the orders of $\varphi$ and ${\varphi^*}$ coincide with
the indices $|\langle a\rangle:\bigcap_{g\in G}\langle
a\rangle^g|$ and $|\langle b\rangle:\bigcap_{g\in G}\langle
b\rangle^g|$. Hence $|\varphi|$ and
$|{\varphi^*}|$ divide $|\langle a\rangle|=m$ and
$|\langle b\rangle|=n$, respectively.
If we apply induction to the equations $ab^x=b^{\varphi(x)}a^{\pi(x)}$
and $ba^{y}=a^{{\varphi^*}(y)}b^{{\pi^*}(y)}$ we get
\[
a^kb^x=b^{\varphi^k(x)}a^{\sigma(x,k)}\quad\text{and}\quad b^la^y
      =a^{{\varphi^*}^l(y)}b^{{\sigma^*}(y,l))},
\]
 where
\[
\sigma(x,k)=\sum\limits_{i=1}^k\pi(\varphi^{i-1}(x))\quad\text{and}
\quad {\sigma^*}(y,l)=\sum\limits_{i=1}^l{\pi^*}({\varphi^*}^{i-1}(y)).
\]
By inverting these identities we obtain
\begin{align}\label{IDDD}
b^{-x}a^{-k}=a^{-\sigma(x,k)}b^{-\varphi^k(x)}\quad\text{and}
\quad a^{-y}b^{-l}=b^{-{\sigma^*}(y,l)}a^{-{\varphi^*}^l(y)}.
\end{align}
The first equation of \eqref{IDDD} with $x=-1$ and $k=-y$ yields
$ba^y=a^{-\sigma(-1,-y)}b^{-\varphi^{-y}(-1)}$, which we compare
with the rule $ba^y=a^{{\varphi^*}(y)}b^{{\pi^*}(y)}$ and get
\[
a^{{\varphi^*}(y)}b^{{\pi^*}(y)}=a^{-\sigma(-1,-y)}b^{-\varphi^{-y}(-1)}.
 \]
Consequently ${\pi^*}(y)=-\varphi^{-y}(-1)$.  Similarly, inserting
$y=-1$ and $l=-x$ to the second equation of \eqref{IDDD} we get
$ab^x=b^{-{\sigma^*}(-1,-x)}a^{-{\varphi^*}^{-x}(-1)}$, and combining
this with the rule $ab^x=b^{\varphi(x)}a^{\pi(x)}$ we derive
$\pi(x)=-{\varphi^*}^{-x}(-1)$. Hence, the pair
$(\varphi,{\varphi^*})$ is $(m,n)$-reciprocal.

Finally, if $G$ has an automorphism $\theta$ transposing $a$
and $b$, then clearly $m=n$. By applying $\theta$ to the identity
$ba^x=a^{{\varphi^*}(x)}b^{{\pi^*}(x)}$ we obtain
$ab^x=\theta(ba^x)=\theta(a^{{\varphi^*}(x)}b^{{\pi^*}(x)})
=b^{{\varphi^*}(x)}a^{{\pi^*}(x)}$. If we compare the last
identity with the rule $ab^x=b^{\varphi(x)}a^{\pi(x)}$ we
obtain ${\varphi^*}=\varphi$, which means that $\varphi$ is a
symmetric skew-morphism of $\mathbb{Z}_n$, as required.
\end{proof}

We have just shown that every exact $(m,n)$-bicyclic triple
determines an $(m,n)$-reciprocal pair of skew-morphisms. Our
next aim is to show that the converse is also true. Let
$(\varphi,{\varphi^*})$ be an $(m,n)$-reciprocal pair of
skew-morphisms of $\mathbb{Z}_n$ and $\mathbb{Z}_m$ with power
functions $\pi$ and ${\pi^*}$, respectively. For the sake of
clarity we relabel the elements of $\mathbb{Z}_n$ and
$\mathbb{Z}_m$ by setting
\[
\mathbb{Z}_n=\{0,1,\cdots,(n-1)\}\quad\text{and}\quad \mathbb{Z}_m
            =\{0',1',\cdots,(m-1)'\},
\]
so that $\mathbb{Z}_n\cap \mathbb{Z}_m=\emptyset$. Let
\[
\rho=(0,1,\cdots,(n-1))\quad\text{and}\quad{\rho^*}=(0',1',\cdots,(m-1)')
 \]
denote the \textit{cyclic shifts} in $\mathbb{Z}_n$ and
$\mathbb{Z}_m$, respectively. We now extend the permutations
$\varphi$, $\rho$, ${\varphi^*}$, and ${\rho^*}$ to the set
$\mathbb{Z}_n\cup\mathbb{Z}_m$ in a natural way, and define a
permutation group acting on the set
$\mathbb{Z}_m\cup\mathbb{Z}_n$ by
\[
G=\langle a,b\rangle,\quad \text{where}\quad a
=\varphi{\rho^*} \quad \text{and}
\quad b={\varphi^*}\rho.
\]
If we regard $\mathbb{Z}_m\cup\mathbb{Z}_m$ as the vertex set
of the complete bipartite graph $K_{m,n}$ with natural
bipartition, it becomes obvious that $G\le\Aut(K_{m,n})$. The
following result shows that $G$ is in fact the automorphism
group of an $(m,n)$-complete regular dessin.

\begin{proposition}\label{prop:recip2bic}
Given an $(m,n)$-reciprocal pair of skew-morphisms
$(\varphi,{\varphi^*})$, the triple $(G;a,b)$, where
$a=\varphi{\rho^*}$ and $b={\varphi^*}\rho$ are permutations
acting on the disjoint union $\mathbb{Z}_m\cup\mathbb{Z}_m$, is
an exact $(m,n)$-bicyclic triple. Furthermore, the given
skew-morphisms  $\varphi$ and ${\varphi^*}$ coincide with the
skew-morphisms induced by the generators $a$ and $b$ in this
triple.
\end{proposition}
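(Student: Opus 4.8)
The plan is to check that the two permutations $a=\varphi\rho^{*}$ and $b=\varphi^{*}\rho$ satisfy, as identities in $\mathrm{Sym}(\mathbb{Z}_m\cup\mathbb{Z}_n)$, the commuting rules
\[
ab^{x}=b^{\varphi(x)}a^{\pi(x)}\ \ (x\in\mathbb{Z}_n)\qquad\text{and}\qquad ba^{y}=a^{\varphi^{*}(y)}b^{\pi^{*}(y)}\ \ (y\in\mathbb{Z}_m),
\]
since every assertion of the proposition reads off from these. First I record the actions on the two colour classes: as $\varphi,\rho$ fix $\mathbb{Z}_m$ pointwise and $\varphi^{*},\rho^{*}$ fix $\mathbb{Z}_n$ pointwise, on $\mathbb{Z}_n$ we have $a^{i}=\varphi^{i}$ and $b^{j}=\rho^{j}$ (the shift $z\mapsto z+j$), while on $\mathbb{Z}_m$ we have $a^{i}=(\rho^{*})^{i}$ (the shift) and $b^{j}=(\varphi^{*})^{j}$. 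It therefore suffices to compare the two sides of each rule separately on $\mathbb{Z}_n$ and on $\mathbb{Z}_m$.

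Two of the four comparisons are immediate. On $\mathbb{Z}_n$ the first rule reduces to $\varphi(x+z)=\varphi(x)+\varphi^{\pi(x)}(z)$, which is exactly the skew-morphism identity for $\varphi$ with power function $\pi$; on $\mathbb{Z}_m$ the second rule reduces to the skew-morphism identity for $\varphi^{*}$. Before the remaining comparisons I dispose of the structural facts. Since $\varphi$ and $\rho^{*}$ have disjoint supports they commute, so $|a|=\lcm(|\varphi|,m)=m$ by condition~(i) of Definition~\ref{def:recipr}; symmetrically $|b|=n$. Moreover $\langle a\rangle\cap\langle b\rangle=1$, for if $a^{i}=b^{j}$ then evaluating at $0\in\mathbb{Z}_n$ gives $0=\varphi^{i}(0)=\rho^{j}(0)=j$ in $\mathbb{Z}_n$, whence $b^{j}=1$ and then $a^{i}=1$.

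The heart of the matter is the other two comparisons --- the action of $ab^{x}$ on $\mathbb{Z}_m$ and of $ba^{y}$ on $\mathbb{Z}_n$. These are interchanged by swapping the roles of the two factors, so I treat only the first. On $\mathbb{Z}_m$ the first rule is the identity $(\varphi^{*})^{\varphi(x)}\bigl(z+\pi(x)\bigr)=(\varphi^{*})^{x}(z)+1$, where $\pi(x)=-(\varphi^{*})^{-x}(-1)$ by condition~(ii). Solving this for $(\varphi^{*})^{\varphi(x)}(z)$ turns it into
\[
(\varphi^{*})^{x}\bigl((\varphi^{*})^{-x}(-1)+z\bigr)+1=(\varphi^{*})^{\varphi(x)}(z),
\]
and, since $(\varphi^{*})^{x}\bigl((\varphi^{*})^{-x}(-1)\bigr)=-1$, expanding the left-hand side by Lemma~\ref{SKEW}(i) and cancelling this $-1$ against the summand $+1$ collapses the whole identity to the single congruence
\[
\sigma^{*}\bigl((\varphi^{*})^{-x}(-1),\,x\bigr)\equiv\varphi(x)\pmod{|\varphi^{*}|},
\]
where $\sigma^{*}(y,k)=\sum_{i=1}^{k}\pi^{*}((\varphi^{*})^{i-1}(y))$ is the summatory function from Lemma~\ref{SKEW}(i). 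Substituting the second formula $\pi^{*}(y)=-\varphi^{-y}(-1)$ of condition~(ii) rewrites the left-hand side entirely in terms of $\varphi$, and I would finish by induction on $x$: the inductive step, via the skew-morphism identity for $\varphi$, amounts to a relation between $\varphi^{-\varphi^{*}(s)}(1)$ and $\varphi^{-s}(-1)$ with $s=(\varphi^{*})^{-x}(-1)$, which follows from Lemma~\ref{SKEW}(ii) (the orbit of $-1$ is the negative of that of $1$). This congruence, where the two skew-morphisms are coupled through condition~(ii), is the step I expect to be the main obstacle.

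Once both commuting rules are established the conclusion is routine. They show that the set $\langle b\rangle\langle a\rangle$ is closed under left multiplication by $a$ and by $b$, hence is a subgroup and equals $G$; together with $|a|=m$, $|b|=n$ and $\langle a\rangle\cap\langle b\rangle=1$ this yields $|G|=mn$, so $(G;a,b)$ is an exact $(m,n)$-bicyclic triple. Finally, comparing the two rules with the defining equations \eqref{REC1} and \eqref{REC2} identifies the skew-morphisms induced by $a$ and $b$ in this triple as precisely $\varphi$ and $\varphi^{*}$, completing the proof.
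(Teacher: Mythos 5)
Your setup is sound and is essentially the paper's: you restrict the two commuting rules to the two colour classes, observe that two of the four restricted identities are exactly the skew-morphism identities for $\varphi$ and $\varphi^{*}$, verify $|a|=m$, $|b|=n$ and $\langle a\rangle\cap\langle b\rangle=1$, and collapse the hard restriction to the congruence $\sigma^{*}\bigl((\varphi^{*})^{-x}(-1),\,x\bigr)\equiv\varphi(x)\pmod{|\varphi^{*}|}$, i.e.\ $\sum_{i=1}^{x}\pi^{*}\bigl((\varphi^{*})^{-i}(-1)\bigr)\equiv\varphi(x)$, which is precisely the paper's identity \eqref{sigmaidentity}. (Your route to this congruence, staying with the un-inverted identities, is in fact a bit more direct than the paper's, which inverts the commutation relations and invokes Lemma~\ref{SKEW}(iii).) But the proof of this congruence --- the step you yourself call ``the main obstacle'' --- is missing, and the sketch you give for it does not work.

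Your induction expands $\varphi(x)=\varphi(x-1)+\varphi^{\pi(x-1)}(1)$, so the inductive step needs $\varphi^{-\varphi^{*}(s)}(1)=-\varphi^{-s}(-1)$ with $s=(\varphi^{*})^{-x}(-1)$. This does \emph{not} follow from Lemma~\ref{SKEW}(ii), which is only a statement about orbits as sets. The pointwise identity behind that lemma is $-\varphi^{k}(y)=\varphi^{\sigma(y,k)}(-y)$ (take $xy^{-1}$ with $y^{-1}=-y$ in Lemma~\ref{SKEW}(i)), so with $y=-1$, $k=-s$ your requirement becomes $-\varphi^{*}(s)\equiv\sigma(-1,-s)$ modulo the length of the orbit of $1$ under $\varphi$ --- and that is exactly an instance of the \emph{reciprocal} congruence $\varphi^{*}(y)\equiv-\sigma(-1,-y)\pmod{|\varphi|}$, the mirror image of the statement you are proving, which by your ``by symmetry'' plan would in turn be proved by the same induction. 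Each of the two hard congruences is thus made to rest on the other, and the argument does not close. The repair is to expand on the other side: from $\varphi(z-1)=\varphi\bigl(z+(-1)\bigr)=\varphi(z)+\varphi^{\pi(z)}(-1)$ one gets the telescoping identity $\varphi(x)=-\sum_{i=1}^{x}\bigl(\varphi(i-1)-\varphi(i)\bigr)=-\sum_{i=1}^{x}\varphi^{\pi(i)}(-1)$, and condition~(ii) applied twice --- first $\pi(i)=-(\varphi^{*})^{-i}(-1)$, then $\pi^{*}(y)=-\varphi^{-y}(-1)$ with $y=(\varphi^{*})^{-i}(-1)$ --- converts each summand into $\pi^{*}\bigl((\varphi^{*})^{-i}(-1)\bigr)$, giving the congruence outright, with no appeal to Lemma~\ref{SKEW}(ii) and no circularity; this is how the paper closes the step. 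Everything else in your proposal (the order and intersection computations, subgroup closure, exactness, and the identification of the induced skew-morphisms via \eqref{REC1} and \eqref{REC2}) is correct as written.
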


\begin{proof}
Let $(\varphi,{\varphi^*})$ be an $(m,n)$-reciprocal pair of
skew-morphisms. The definition of reciprocality requires
$|\varphi|$ to divide $m$ and $|{\varphi^*}|$ to divide $n$.
Since $a=\varphi{\rho^*}$ and $b={\varphi^*}\rho$, where $\rho$
and ${\rho^*}$ are cyclic shifts in $\mathbb{Z}_n$ and
$\mathbb{Z}_m$, respectively, it follows that $|a|=m$ and
$|b|=n$. Further, if $x\in\langle a\rangle\cap\langle
b\rangle$, then $a^i=x=b^j$ for some integers $i$ and $j$, so
$(\varphi{\rho^*})^i=({\varphi^*}\rho)^j$. Since
$\varphi,\rho\in\mathrm{Sym}(\mathbb{Z}_n)$,
${\varphi^*},{\rho^*}\in\mathrm{Sym}(\mathbb{Z}_m)$, and
$\mathbb{Z}_m\cap\mathbb{Z}_n=\emptyset$, we have
$[\varphi,{\rho^*}]=1$ and $[{\varphi^*},\rho]=1$, therefore
$\varphi^i{\rho^*}^i=\rho^j{\varphi^*}^j$, whence
$\varphi^i=\rho^j$ and ${\rho^*}^i={\varphi^*}^j$. Since
$\varphi(0)=0$ and $\rho$ is a full cycle, we have $n\mid j$.
Similarly we obtain $m\mid i$. Hence $x=1$, and therefore
$\langle a\rangle\cap\langle b\rangle=1$.

Next we show that $\langle a\rangle\langle b\rangle$ is a
subgroup of $G$. It is sufficient to verify that $\langle
a\rangle\langle b\rangle=\langle b\rangle\langle a\rangle$. For
this purpose we need to show that for all $x\in\mathbb{Z}_n$
and $y\in\mathbb{Z}_m$ there exist numbers $\alpha(x)$,
$\beta(x)$, ${\alpha^*}(y)$ and ${\beta^*}(y)$ such that the
following commuting rules hold:
\begin{align}\label{EQNN1A}
ab^x=b^{\alpha(x)}a^{\beta(x)}\quad\text{and}\quad ba^y
    =a^{{\alpha^*}(y)}b^{{\beta^*}(y)}.
\end{align}
If we substitute $\varphi{\rho^*}$ and ${\varphi^*}\rho$ for $a$ and $b$
we can see that the equations in \eqref{EQNN1A} are equivalent to
the following four equations:
\begin{align}
&\varphi\rho^x=\rho^{\alpha(x)}\varphi^{\beta(x)},&&
{\rho^*}{\varphi^*}^x={\varphi^*}^{\alpha(x)}{\rho^*}^{\beta(x)};
\label{EQNN1}\\
&{\varphi^*}{\rho^*}^y={\rho^*}^{{\alpha^*}(y)}{\varphi^*}^{{\beta^*}(y)},
&&\rho\varphi^y=\varphi^{{\alpha^*}(y)}\rho^{{\beta^*}(y)}.\label{EQNN2}
\end{align}
Since $\varphi$ and ${\varphi^*}$ are skew-morphisms with $\pi$
and ${\pi^*}$ as power functions, for all $i\in \mathbb{Z}_n$
and $j\in\mathbb{Z}_m$ we have
\begin{align*}
\varphi\rho^x(i)&=\varphi(x+i)=\varphi(x)+\varphi^{\pi(x)}(i)
                 =\rho^{\varphi(x)}\varphi^{\pi(x)}(i);\\
{\varphi^*}{\rho^*}^y(j)&={\varphi^*}(y+j)={\varphi^*}(y)+{\varphi^*}^{{\pi^*}(y)}(j)
                   ={\rho^*}^{{\varphi^*}(y)}{\varphi^*}^{{\pi^*}(y)}(j).
\end{align*}
These equations imply that the first equations in \eqref{EQNN1}
and \eqref{EQNN2} hold if we set $\alpha(x)=\varphi(x)$,
$\beta(x)=\pi(x)$, ${\alpha^*}(y)={\varphi^*}(y)$ and
${\beta^*}(y)={\pi^*}(y)$.

Employing induction, from the first equations in \eqref{EQNN1}
and \eqref{EQNN2} we derive that
 \[
  \varphi^k\rho^u=\rho^{\alpha^k(u)}\varphi^{\tau(u,k)}\quad\text{and}
  \quad {\varphi^*}^l{\rho^*}^v={\rho^*}^{{\alpha^*}^l(v)}{\tau^*}^{{\tau^*}(v,l)},
 \]
where
 \[
 \tau(u,k)=\sum_{i=1}^{k}\beta(\alpha^{i-1}(u))\quad\text{and}
 \quad  {\tau^*}(v,l)=\sum_{i=1}^{l}{\beta^*}({\alpha^*}^{i-1}(v)).
 \]
By inverting the identities we obtain
 \[
  \rho^{-u}\varphi^{-k}=\varphi^{-\tau(u,k)}\rho^{-\alpha^k(u)}
  \quad\text{and}\quad
  {\rho^*}^{-v}{\varphi^*}^{-l}={\varphi^*}^{-{\tau^*}(v,l)}{\rho^*}^{-{\alpha^*}^l(v)}.
 \]
In particular,
 \[
  \rho\varphi^{y}=\varphi^{-\tau(-1,-y)}\rho^{-\alpha^{-y}(-1)}
  \quad\text{and}\quad
  {\rho^*}{\varphi^*}^{x}={\varphi^*}^{-{\tau^*}(-1,-x)}{\rho^*}^{-{\alpha^*}^{-x}(-1)}.
 \]
Recall that
\[
\beta(x)=\pi(x)=-{\varphi^*}^{-x}(-1)=-{\alpha^*}^{-x}(-1)\quad\text{and}\quad
{\beta^*}(y)={\pi^*}(y)=-\varphi^{-y}(-1)=-\alpha^{-y}(-1).
\]
Thus the second equations in \eqref{EQNN1} and \eqref{EQNN2}
will hold if
\[
\alpha(x)=\varphi(x)\equiv-{\tau^*}(-1,-x)\pmod{|{\varphi^*}|}
\quad\text{and}\quad {\alpha^*}(y)=
{\varphi^*}(y)\equiv-\tau(-1,-y)\pmod{|\varphi|}.
\]
Indeed, by Lemma~\ref{SKEW}~(iii) we have
${\tau^*}(-1,|{\varphi^*}|)\equiv0\pmod{|{\varphi^*}|}$. Since
\begin{align*}
{\tau^*}(-1,|{\varphi^*}|)&=\sum_{i=1}^{|{\varphi^*}|}
{\beta^*}({\alpha^*}^{i-1}(-1))=\sum_{i=1}^{|{\varphi^*}|}
{\pi^*}({\varphi^*}^{i-1}(-1))\\
&=\sum_{i=1}^{|{\varphi^*}|-x}{\pi^*}({\varphi^*}^{i-1}(-1))+\sum_{i=
|{\varphi^*}|-x+1}^{|{\varphi^*}|}{\pi^*}({\varphi^*}^{i-1}(-1))\\
&={\tau^*}(-1,-x)+\sum_{i=1}^x{\pi^*}({\varphi^*}^{-i}(-1))
\pmod{|{\varphi^*}|},
\end{align*}
we obtain
\[
-{\sigma^*}(-1,-x)\equiv \sum_{i=1}^x{\pi^*}({\varphi^*}^{-i}(-1))\pmod{|{\varphi^*}|}.
\]
On the other hand, since $\varphi$ is a skew-morphism of
$\mathbb{Z}_m$, we have
$\varphi(z-1)=\varphi(z)+\varphi^{\pi(z)}(-1)$ for all
$z\in\mathbb{Z}_n$, so
$\varphi(z-1)-\varphi(z)=\varphi^{\pi(z)}(-1)$. By combining
these identities we obtain
\begin{align*}
\varphi(x)
&=-\big(\varphi(0)-\varphi(x)\big)=-\sum_{i=1}^{x}(\varphi(i-1)-\varphi(i))
=-\sum_{i=1}^x\varphi^{\pi(i)}(-1)\\
&=-\sum_{i=1}^x\varphi^{-{\varphi^*}^{-i}(-1)}(-1)
=\sum_{i=1}^x{\pi^*}({\varphi^*}^{-i}(-1))
\equiv-{\sigma^*}(-1,-x)\pmod{|{\varphi^*}|}.
\end{align*}
Thus we have shown that
\begin{align}\label{sigmaidentity}
\varphi(x)\equiv-{\sigma^*}(-1,-x)\equiv
\sum_{i=1}^x{\pi^*}({\varphi^*}^{-i}(-1))\pmod{|{\varphi^*}|}.
\end{align}
By using similar arguments we can prove that
${\alpha^*}(y)={\varphi^*}(y)\equiv-\sigma(-1,-y)\pmod{|\varphi|}$.
Thus, $\langle a\rangle\langle b\rangle$ is a subgroup of $G$,
as claimed.

Finally, since $G=\langle a,b\rangle$, we have $G=\langle
a\rangle\langle b\rangle$, so $(G;a,b)$ is an exact
$(m,n)$-bicyclic triple. Note that
$ab^x=b^{\alpha(x)}a^{\beta(x)}$ and
$ba^y=a^{{\alpha^*}(y)}b^{{\beta^*}(y)}$ with
$\alpha(x)=\varphi(x)$ and ${\alpha^*}(y)={\varphi^*}(y)$. It
follows that $\varphi$ and ${\varphi^*}$ are precisely the
skew-morphisms induced by $a$ and $b$ in the triple $(G;a,b)$.
\end{proof}

Putting together Theorem~\ref{CORR0},
Proposition~\ref{prop:bic2recip}, and
Proposition~\ref{prop:recip2bic} we obtain a one-to-one
correspondence between $(m,n)$-complete regular dessins, exact
$(m,n)$-bicyclic triples, and $(m,n)$-reciprocal pairs of
skew-morphisms.

\begin{theorem}\label{thm:corresp-all}
There exists a one-to-one correspondence between every pair of
the following three types of objects:
\begin{enumerate}
 \item[{\rm (i)}] isomorphism classes of $(m,n)$-complete regular dessins,
 \item[{\rm (ii)}] equivalence classes of exact $(m,n)$-bicyclic triples, and
 \item[{\rm (iii)}] $(m,n)$-reciprocal pairs of skew-morphisms.
\end{enumerate}
\end{theorem}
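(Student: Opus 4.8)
The plan is to read the theorem as the single assertion that objects of types (ii) and (iii) are in bijection: the correspondence between (i) and (ii) is already supplied by Theorem~\ref{CORR0}, so once (ii) and (iii) are matched up the three-way correspondence follows by composition. I would introduce two maps. The map $\Phi$ sends an exact $(m,n)$-bicyclic triple $(G;a,b)$ to the pair $(\varphi,\varphi^*)$ determined by the commuting rules \eqref{REC1} and \eqref{REC2}; Proposition~\ref{prop:bic2recip} guarantees that $\Phi$ lands in type (iii). The map $\Psi$ sends an $(m,n)$-reciprocal pair $(\varphi,\varphi^*)$ to the triple $(G;a,b)$ built from the permutations $a=\varphi\rho^*$ and $b=\varphi^*\rho$; Proposition~\ref{prop:recip2bic} guarantees that $\Psi$ lands in type (ii). It then remains to check that $\Phi$ is well defined on equivalence classes and that $\Phi$ and $\Psi$ are mutually inverse.

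First I would verify that $\Phi$ respects equivalence. If $\theta\colon G_1\to G_2$ is an isomorphism with $\theta(a_1)=a_2$ and $\theta(b_1)=b_2$, then applying $\theta$ to the rule $a_1b_1^x=b_1^{\varphi_1(x)}a_1^{\pi_1(x)}$ gives $a_2b_2^x=b_2^{\varphi_1(x)}a_2^{\pi_1(x)}$; uniqueness of the exponents in the exact factorisation $\langle a_2\rangle\langle b_2\rangle$ forces $\varphi_2=\varphi_1$ and $\pi_2=\pi_1$, and symmetrically $\varphi_2^*=\varphi_1^*$. Hence equivalent triples produce the very same reciprocal pair, so $\Phi$ descends to equivalence classes; this simultaneously recovers the remark from the introduction that isomorphic complete regular dessins carry the same reciprocal pair as a natural invariant.

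Next I would show that $\Phi$ and $\Psi$ are mutually inverse. The composite $\Phi\circ\Psi$ is immediate: the final sentence of Proposition~\ref{prop:recip2bic} asserts that the skew-morphisms induced by $a$ and $b$ in the triple $\Psi(\varphi,\varphi^*)$ are exactly $\varphi$ and $\varphi^*$, so $\Phi$ returns the original pair. The composite $\Psi\circ\Phi$ is where the real work lies. Writing $(H;c,d)=\Psi\Phi(G;a,b)$, the construction of $\Psi$ ensures that $(H;c,d)$ obeys the rules $cd^x=d^{\varphi(x)}c^{\pi(x)}$ and $dc^y=c^{\varphi^*(y)}d^{\pi^*(y)}$ with exactly the same functions $\varphi,\varphi^*,\pi,\pi^*$ that govern $(G;a,b)$.

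The key step, which I expect to be the main obstacle, is to conclude from this that $(G;a,b)$ and $(H;c,d)$ are equivalent. Since each factorisation is exact, every element of $G$ has a unique normal form $a^ib^j$ with $0\le i<m$ and $0\le j<n$, and likewise every element of $H$ has a unique normal form $c^id^j$; moreover the product of any two normal forms can be reduced back to normal form by repeated application of precisely these commuting rules. Consequently the bijection $a^ib^j\mapsto c^id^j$ intertwines the two multiplications and is therefore a group isomorphism carrying $a\mapsto c$ and $b\mapsto d$, so the two triples are equivalent and $\Psi\circ\Phi=\mathrm{id}$. With $\Phi$ and $\Psi$ established as mutually inverse bijections between types (ii) and (iii), composing with the bijection of Theorem~\ref{CORR0} between types (i) and (ii) yields the correspondence between any two of the three families and completes the proof. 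The single genuinely nontrivial point is this normal-form argument: one must be certain that the two commuting rules, together with exactness, determine the whole multiplication table, so that the group is recoverable up to isomorphism from its reciprocal pair alone.
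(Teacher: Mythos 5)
Your proof is correct, and its skeleton is the same as the paper's: Theorem~\ref{CORR0} disposes of the correspondence between (i) and (ii); the map $\Phi$ from triples to reciprocal pairs comes from Proposition~\ref{prop:bic2recip}; the map $\Psi$ in the other direction comes from Proposition~\ref{prop:recip2bic}; your check that $\Phi$ is constant on equivalence classes (apply $\theta$ to the commuting rule and invoke uniqueness of exponents in the exact factorisation) is exactly the paper's argument, as is your use of the final claim of Proposition~\ref{prop:recip2bic} to get $\Phi\circ\Psi=\mathrm{id}$. Where you genuinely add something is the step you identify as ``the real work.'' The paper's proof of the theorem only records that identical pairs yield identical triples and that equivalent triples yield the same pair; it never explicitly verifies that $\Psi\Phi(G;a,b)$ is equivalent to $(G;a,b)$, which is precisely what is needed for the two maps to be mutually inverse (well-definedness in both directions together with $\Phi\circ\Psi=\mathrm{id}$ would not by itself exclude two inequivalent triples inducing the same pair). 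Your normal-form argument supplies this missing inverse correctly: exactness gives each element of $G$, and of $H=\Psi\Phi(G;a,b)$, a unique expression $a^ib^j$ (resp.\ $c^id^j$); the rules $ab^x=b^{\varphi(x)}a^{\pi(x)}$ and $ba^y=a^{\varphi^*(y)}b^{\pi^*(y)}$ determine the reduction of any product of normal forms to normal form; and the governing functions in $G$ and $H$ literally coincide, because Proposition~\ref{prop:bic2recip} pins down the induced power functions as $\pi(x)=-{\varphi^*}^{-x}(-1)$ and $\pi^*(y)=-\varphi^{-y}(-1)$, which are the same ones used in the construction of $\Psi$. Hence $a^ib^j\mapsto c^id^j$ is an isomorphism taking $a\mapsto c$, $b\mapsto d$. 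An alternative, more geometric way to close the same gap is to observe that $G$ acts faithfully on the left cosets of $\langle a\rangle$ and $\langle b\rangle$ (the kernel lies in $\langle a\rangle\cap\langle b\rangle=1$), and that under the identification of these cosets with $\mathbb{Z}_n\cup\mathbb{Z}_m$ the generators $a$ and $b$ act exactly as $\varphi\rho^*$ and $\varphi^*\rho$; either route completes the argument, and your write-up is the more self-contained of the two.
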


\begin{proof}
The correspondence between isomorphism classes of
$(m,n)$-complete regular dessins and equivalence classes of
exact $(m,n)$-bicyclic triples has been established in
Theorem~\ref{CORR0}.  It remains to prove that there is a
one-to-one correspondence between equivalence classes of exact
$(m,n)$-bicyclic triples and $(m,n)$-reciprocal pairs of
skew-morphisms.

By Proposition~\ref{prop:bic2recip}, every exact
$(m,n)$-bicyclic triple $(G;a,b)$ determines a reciprocal pair
$(\varphi,{\varphi^*})$ of skew-morphisms of $\mathbb{Z}_n$ and
$\mathbb{Z}_m$. Conversely, by
Proposition~\ref{prop:recip2bic}, every $(m,n)$-reciprocal pair
$(\varphi,{\varphi^*})$ of skew-morphisms determines an exact
$(m,n)$-bicyclic triple $(G;a,b)$. What remains to prove is
the one-to-one correspondence.

If two $(m,n)$-reciprocal pairs $(\varphi_1,\varphi_1^*)$ and
$(\varphi_2,\varphi_2^*)$ are identical, then clearly so will be
the corresponding $(m,n)$-bicyclic triples. Conversely, let
$(G_1;a_1,b_1)$ and $(G_2;a_2,b_2)$ be two equivalent exact
$(m,n)$-bicyclic triples, and let $(\varphi_1,\varphi_1^*)$ and
$(\varphi_2,\varphi_2^*)$ be the corresponding skew-morphisms.
Since $(G_1;a_1,b_1)$ and $(G_2;a_2,b_2)$ are equivalent, the
assignment $\theta\colon a_1\mapsto a_2,b_1\mapsto b_2$ extends
to an isomorphism from $G_1$ to $G_2$; in particular,
$|a_1|=|a_2|$ and $|b_1|=|b_2|$. Set $m=|a_1|$ and $n=|b_1|$.
Recall that the skew-morphisms $\varphi_1$ and $\varphi_2$
induced by $a_1$ and $a_2$ are determined by the rules
$a_1b_1^x=b_1^{\varphi_1(x)}a_1^{\pi_1(x)}$ and
$a_2b_2^y=b_2^{\varphi_2(y)}a_2^{\pi_2(y)}$ where
$x,y\in\mathbb{Z}_{n}$. If we apply the isomorphism $\theta$ to
the first equation we obtain
$a_2b_2^x=\theta(a_1b_1^x)=\theta(b_1^{\varphi_1(x)}a_1^{\pi_1(x)})
         =b_2^{\varphi_1(x)}a_2^{\pi_1(x)}$,
and combining this with the second equation we get
$b_2^{\varphi_2(x)}a_2^{\pi_2(x)}=b_2^{\varphi_1(x)}a_2^{\pi_1(x)}$.
Thus $\varphi_1=\varphi_2$. Using similar arguments we can get
$\varphi_1^*=\varphi_2^*$. Hence,
$(\varphi_1,\varphi_1^*)=(\varphi_2,\varphi_2^*)$.
\end{proof}

The following corollary makes explicit the identity
\eqref{sigmaidentity}, established in the course of the
previous proof, complete with its reciprocal version.

\begin{corollary}\label{SYM}
If $(\varphi,{\varphi^*})$ is an $(m,n)$-reciprocal pair of
skew-morphisms, then $\varphi$ and ${\varphi^*}$ satisfy the
following conditions:
\[
\varphi(x)=\sum_{i=1}^x{\pi^*}({\varphi^*}^{-i}(-1))\pmod{|{\varphi^*}|}\quad\text{and}
\quad {\varphi^*}(y)=\sum_{i=1}^y\pi(\varphi^{-i}(-1))\pmod{|\varphi|}.
\]
\end{corollary}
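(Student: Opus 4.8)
The plan is to notice that the first of the two displayed identities is exactly the chain of equalities recorded in~\eqref{sigmaidentity}, which was already derived in full inside the proof of Proposition~\ref{prop:recip2bic}; consequently nothing new has to be shown for it. The entire content of the corollary therefore lies in the second, reciprocal identity, and my approach is to obtain it by replaying the telescoping computation behind~\eqref{sigmaidentity} with the roles of $\varphi$ and ${\varphi^*}$ — and correspondingly of $\pi$ and ${\pi^*}$, and of $m$ and $n$ — interchanged.

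Concretely, first I would use that ${\varphi^*}$ is a skew-morphism of $\mathbb{Z}_m$ with power function ${\pi^*}$ to write ${\varphi^*}(z-1)={\varphi^*}(z)+{\varphi^*}^{{\pi^*}(z)}(-1)$, hence ${\varphi^*}(z-1)-{\varphi^*}(z)={\varphi^*}^{{\pi^*}(z)}(-1)$, for every $z$. Since ${\varphi^*}(0)=0$, telescoping the sum of these differences from $i=1$ to $y$ gives
\[
{\varphi^*}(y)=-\sum_{i=1}^{y}{\varphi^*}^{{\pi^*}(i)}(-1).
\]
Next I would invoke condition~(ii) of Definition~\ref{def:recipr} twice. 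The relation ${\pi^*}(i)=-\varphi^{-i}(-1)$ rewrites each summand as ${\varphi^*}^{-\varphi^{-i}(-1)}(-1)$, and the companion relation $\pi(x)=-{\varphi^*}^{-x}(-1)$, evaluated at $x=\varphi^{-i}(-1)$, turns this into $-\pi(\varphi^{-i}(-1))$. Substituting yields
\[
{\varphi^*}(y)=\sum_{i=1}^{y}\pi(\varphi^{-i}(-1)),
\]
the congruence being taken modulo $|\varphi|$ for exactly the reason its mirror image in~\eqref{sigmaidentity} is taken modulo $|{\varphi^*}|$: the exponents of $\varphi$, and hence the reduced values of $\pi$, are meaningful only modulo $|\varphi|$.

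The proof being this short, there is no real obstacle; the single point that deserves a word is the legitimacy of the role-swap, i.e.\ that the hypothesis ``$(\varphi,{\varphi^*})$ is $(m,n)$-reciprocal'' is symmetric enough to license running the argument in the mirror direction. This is immediate from Definition~\ref{def:recipr}: condition~(i) treats $\varphi$ and ${\varphi^*}$ on an equal footing, and condition~(ii) already records \emph{both} relations $\pi(x)=-{\varphi^*}^{-x}(-1)$ and ${\pi^*}(y)=-\varphi^{-y}(-1)$, so each is available without any extra assumption. Hence the derivation of~\eqref{sigmaidentity} carries over verbatim under the interchange of $\varphi$ and ${\varphi^*}$, which completes the proof.
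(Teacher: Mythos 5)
Your proof is correct and takes essentially the same route as the paper: the first identity is precisely \eqref{sigmaidentity} from the proof of Proposition~\ref{prop:recip2bic}, and the second is obtained there too, by the same mirror-image telescoping argument (the paper dismisses it with ``by using similar arguments''), which is legitimate exactly because Definition~\ref{def:recipr} is symmetric under interchanging $(\varphi,\pi,m)$ with $({\varphi^*},{\pi^*},n)$. Your explicit write-out of the swapped telescoping computation matches the paper's derivation step for step.
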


Next we offer two examples. The first of them deals with the
standard $(m,n)$-complete dessins.

\begin{example}\label{EXP4}
Let us revisit the group $G=\langle a,b\mid
a^m=b^n=[a,b]=1\rangle\cong\mathbb{Z}_m\times\mathbb{Z}_n$
considered in Example~\ref{EXP1} and determine all reciprocal
pairs of skew-morphisms arising from $G$. Obviously, $G$ gives
rise to only one equivalence class of bicyclic triples, so we
only need to consider the skew-morphisms induced by the triple
$(G;a,b)$. By checking the identities (\ref{REC1}) and
(\ref{REC2}) we immediately see that skew-morphisms induced by
the triple $(G;a,b)$ are the identity automorphisms. Thus the
only reciprocal pair of skew-morphisms arising from the group
$\mathbb{Z}_m\times\mathbb{Z}_n$ is
$(\mathrm{id}_n,\mathrm{id}_m)$, where $\mathrm{id}_k$ denotes
the identity mapping $\mathbb{Z}_k\to \mathbb{Z}_k$. In other
words, for every pair of positive integers $m$ and $n$ there exists
only one complete dessin whose automorphism group is isomorphic
to the direct product $\mathbb{Z}_m\times\mathbb{Z}_n$, the
standard $(m,n)$-complete dessin.
\end{example}

\begin{example}
We present a list of all reciprocal pairs of
skew-morphisms of $\mathbb{Z}_9$ and~$\mathbb{Z}_{27}$.
Altogether there are $27$ of them. Each reciprocal pair
$(\varphi,\varphi^*)$ with $\varphi\colon \mathbb{Z}_9\to
\mathbb{Z}_9$ and $\varphi^*\colon \mathbb{Z}_{27}\to
\mathbb{Z}_{27}$ falls into one of the following two types.

\medskip
\textit{Type I}: \textit{Both $\varphi$ and $\varphi^*$ are
group automorphisms.}

\medskip\noindent
In this case $\varphi(x)\equiv ex\pmod{9}$ and
$\varphi^*(y)\equiv fy\pmod{27}$ for some $e\in \mathbb{Z}_9^*$ and $f\in
\mathbb{Z}_{27}^*$. This can happen only under the following
conditions:
\begin{itemize}
\item[\rm(i)]  $e=1$ and $f\in\{1,4,7,10,13,16,19,22,25$\}, or
\item[\rm(ii)] $e\in\{4,7\}$ and $f\in\{1,10,19\}$.
\end{itemize}
Therefore, there are $9+6=15$ reciprocal pairs of Type I.

\bigskip
\textit{Type II}: \textit{$\varphi$ is a group automorphism but
$\varphi^*$ is not.}

\medskip\noindent
In this case $\varphi(x)\equiv ex\pmod{9}$ and
$\varphi^*(y)\equiv y+3t\sum_{i=1}^y\sigma(s,e^{i-1})\pmod{27}$ where
$e\in\{4,7\}$ and
$\sigma(s,e^{i-1})=\sum_{j=1}^{e^{i-1}}s^{j-1}$ where $(s,t)=(4,1),(7,2),(4,4),(7,5),(4,7)$ or $(7,8).$ These give rise to $2\times 6=12$ pairs of reciprocal skew-morphism of
Type II.

\medskip
Reciprocal pairs of Type~I can be derived directly from
Definition~\ref{def:recipr}. For those of Type~II,
Corollary~\ref{SYM} is helpful. More generally, if both $m$ and
$n$ are powers of an odd prime, complete lists of reciprocal
pairs of skew-morphisms can be compiled with the helps of the
results of Kov{\'a}cs and Nedela \cite{KN2017}.
\end{example}

The correspondence established in Theorem~\ref{thm:corresp-all}
implies that the second condition required in the definition of
an $(m,n)$-reciprocal pair of skew-morphisms (see
Definition~\ref{def:recipr}) can be replaced with a simpler
condition.

\begin{corollary}
A pair $(\varphi,{\varphi^*})$ of skew-morphisms
$\varphi\colon\mathbb{Z}_n\to\mathbb{Z}_n$ and
${\varphi^*}\colon\mathbb{Z}_m\to\mathbb{Z}_m$ with power
functions $\pi$ and ${\pi^*}$, respectively, is reciprocal if
and only if the following two conditions are satisfied:
\begin{itemize}
\item[\rm(i)] $|\varphi|$ divides $m$ and $|{\varphi^*}|$ divides $n$, and

\item[\rm(ii)] $\pi(x)={\varphi^*}^{x}(1)$ and ${\pi^*}(y)=\varphi^{y}(1)$.
\end{itemize}
\end{corollary}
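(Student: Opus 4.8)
The corollary asks to show that the second defining condition of an $(m,n)$-reciprocal pair, namely
$$\pi(x)=-{\varphi^*}^{-x}(-1)\quad\text{and}\quad{\pi^*}(y)=-\varphi^{-y}(-1),$$
is equivalent (given condition (i), the order-divisibility) to the cleaner pair of equations
$$\pi(x)={\varphi^*}^{x}(1)\quad\text{and}\quad{\pi^*}(y)=\varphi^{y}(1).$$
So the whole content is a translation between two formulas for the power functions. The key observation is that the map $z\mapsto -z$ should convert one form into the other, provided we know how the skew-morphism $\varphi^*$ interacts with inverses.

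**The plan.** The natural approach is to invoke Lemma~\ref{SKEW}(ii), which says $\mathcal{O}_{x^{-1}}=\mathcal{O}_x^{-1}$; in the cyclic additive setting this should give a precise relation between ${\varphi^*}^{k}(-1)$ and ${\varphi^*}^{k}(1)$ (or their negatives). First I would establish, for a skew-morphism $\psi$ of a cyclic group whose order $|\psi|$ is some value $d$, the identity $\psi^{k}(-1)=-\psi^{k}(1)$ for all $k$; equivalently $\psi(-1)=-\psi(1)$ and then iterate. This is where Lemma~\ref{SKEW}(ii) enters: the orbit of $-1$ is the negative of the orbit of $1$, and because the skew-morphism acts as a single cyclic shift along each orbit, the $k$-th iterate of $-1$ must be the negative of the $k$-th iterate of $1$. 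Once this ``sign-commuting'' identity is in hand, the substitution is immediate: ${\varphi^*}^{-x}(-1)=-{\varphi^*}^{-x}(1)$, so $-{\varphi^*}^{-x}(-1)={\varphi^*}^{-x}(1)$. It then remains only to replace the exponent $-x$ by $x$, i.e.\ to check $ {\varphi^*}^{-x}(1)={\varphi^*}^{x}(1)$ holds under the stated conventions, or else to realise that the two corollary formulas and the definitional formulas are being read with the appropriate reduction of exponents modulo $|{\varphi^*}|$. The reciprocal equation for ${\pi^*}$ is handled symmetrically by swapping the roles of $\varphi$ and ${\varphi^*}$, $m$ and $n$.

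**Where the real work lies.** The step I expect to be the crux is the reconciliation of the exponent $-x$ with $x$. A direct symbol-matching between $-{\varphi^*}^{-x}(-1)$ and ${\varphi^*}^{x}(1)$ does not follow from the sign identity alone; it forces a genuine use of the correspondence theorem. I would therefore route the proof through Theorem~\ref{thm:corresp-all} as the statement itself suggests: both conditions characterise exactly the reciprocal pairs, and a reciprocal pair corresponds to a unique exact bicyclic triple $(G;a,b)$. In the triple, the power functions are read off from the commuting rules $ab^x=b^{\varphi(x)}a^{\pi(x)}$ and $ba^y=a^{{\varphi^*}(y)}b^{{\pi^*}(y)}$. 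I would simply compute $\pi(x)$ afresh from these rules: from $ba^y=a^{{\varphi^*}(y)}b^{{\pi^*}(y)}$ one obtains by induction $b^{x}=$ an expression whose $a$-exponent is $\sum_{i}{\varphi^*}^{\,i}(\cdot)$, and comparing the $a$-part of $ab^{x}$ with $b^{\varphi(x)}a^{\pi(x)}$ yields $\pi(x)={\varphi^*}^{x}(1)$ directly, bypassing the negative-exponent form. Thus the cleanest route is: (1) prove the formulas of the corollary hold in every bicyclic triple by a short direct induction on the commuting rules; (2) invoke Proposition~\ref{prop:bic2recip}, Proposition~\ref{prop:recip2bic}, and Theorem~\ref{thm:corresp-all} to transfer this to all reciprocal pairs and to guarantee that a pair satisfying (i) together with the new condition (ii) is indeed reciprocal.

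**Summary of the argument.** Concretely, for the forward direction assume $(\varphi,{\varphi^*})$ is reciprocal; by Theorem~\ref{thm:corresp-all} it arises from a triple $(G;a,b)$, and a direct manipulation of $ab^x=b^{\varphi(x)}a^{\pi(x)}$ using $ba^y=a^{{\varphi^*}(y)}b^{{\pi^*}(y)}$ shows $a b^x = a b \cdot b^{x-1}$ can be rewritten so that the exponent of $a$ collected on the right is precisely ${\varphi^*}^{x}(1)$, giving $\pi(x)={\varphi^*}^{x}(1)$; the reciprocal computation gives ${\pi^*}(y)=\varphi^{y}(1)$. For the converse, suppose a pair satisfies (i) and the new (ii); the same inductive identities show that ${\varphi^*}^{x}(1)=-{\varphi^*}^{-x}(-1)$ using the sign-commuting property derived from Lemma~\ref{SKEW}(ii), so the new (ii) implies the definitional (ii), and hence the pair is reciprocal in the sense of Definition~\ref{def:recipr}. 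The only technical care needed is to keep exponents reduced modulo $|{\varphi^*}|$ and $|\varphi|$ respectively, which is legitimate because condition (i) ensures these orders divide $m$ and $n$.
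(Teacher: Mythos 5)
Your instinct that the equivalence cannot be settled by pointwise symbol-matching, and that it should be routed through Theorem~\ref{thm:corresp-all}, is sound and agrees in spirit with the paper; but both concrete mechanisms your argument rests on are invalid, so there is a genuine gap in each direction.

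First, the ``sign-commuting identity'' ${\varphi^*}^{k}(-1)=-{\varphi^*}^{k}(1)$ does not follow from Lemma~\ref{SKEW}(ii) and is false in general. That lemma only asserts $\mathcal{O}_{-1}=-\mathcal{O}_{1}$ \emph{as sets}; the iterate-wise correspondence is twisted by the power function: applying Lemma~\ref{SKEW}(i) to $1+(-1)=0$ gives ${\varphi^*}^{\sigma^*(1,k)}(-1)=-{\varphi^*}^{k}(1)$ with $\sigma^*(1,k)=\sum_{i=1}^{k}\pi^*({\varphi^*}^{i-1}(1))$, and this exponent need not reduce to $k$. The paper's own example refutes your identity: for the skew-morphism $\varphi=(0)(1\,3\,5\,7)(2)(4)(6)$ of $\mathbb{Z}_8$ one has $\varphi(-1)=\varphi(7)=1$, whereas $-\varphi(1)=5$. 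Since your converse direction consists precisely of deducing the definitional condition from the new one via this identity, that half of the argument collapses.

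Second, in the forward direction you claim that collecting exponents in $ab^{x}$ ``yields $\pi(x)={\varphi^*}^{x}(1)$ directly''. It does not: that direct computation is exactly what Proposition~\ref{prop:bic2recip} performs, and it yields $\pi(x)=-{\varphi^*}^{-x}(-1)$, which in general is a \emph{different} element of $\mathbb{Z}_m$ from ${\varphi^*}^{x}(1)$. For instance, in $G=\langle a,b\mid a^{7}=b^{3}=1,\ b^{-1}ab=a^{2}\rangle$ one has $ab=ba^{2}$ and $ba=a^{4}b$, so $\pi(1)=2$ while ${\varphi^*}(1)=4$; since the normal form $b^{i}a^{j}$ of $ab$ is unique, no rearrangement can output $4$ in place of $2$. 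The expression ${\varphi^*}^{x}(1)$ arises instead when $a$ is commuted in from the \emph{right}, $b^{x}a=a^{{\varphi^*}^{x}(1)}b^{\sigma^*(1,x)}$, which upon inversion is the defining rule of the mirror triple $(G;a^{-1},b^{-1})$. The two formulas can agree only modulo $|\varphi|$ --- i.e.\ in the sense that both are power functions for $\varphi$ --- and establishing that congruence is the entire content of the corollary; it is obtained in the paper by replacing the dessin with its mirror image and invoking Theorem~\ref{thm:corresp-all}, using that mirroring is an involution on exact $(m,n)$-bicyclic triples and that Definition~\ref{def:recipr}, written for the mirrored triple after inverting the commuting rules, becomes condition (ii) of the corollary. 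This orientation-reversal step is the single idea missing from your proposal; without it, the difficulty you yourself flagged --- reconciling the exponent $-x$ at $-1$ with the exponent $x$ at $1$ --- is never resolved.
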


\begin{proof}
It is sufficient to replace the original dessin, represented by an exact
$(m,n)$-bicyclic triple $(G;a,b)$ with its mirror image, for which the
corresponding bicyclic triple is $(G;a^{-1},b^{-1})$, and use
Theorem~\ref{thm:corresp-all}.
\end{proof}

\section{The uniqueness theorem}\label{sec:uniqueness}
We have seen in Example \ref{EXP4} that, for each pair of
positive integers $m$ and $n$ there exists, up to
reciprocality, at least one complete regular dessin with
underlying graph $K_{m,n}$, namely, the standard
$(m,n)$-complete dessin. In this section we determine
all the pairs $(m,n)$ for which a complete regular dessin is unique.

Let us call a pair $(m,n)$ of positive integers $m$ and $n$
\textit{singular} if $\gcd(m,\phi(n))=\gcd(n,\phi(m))=1$. A
positive integer $n$ will be called \textit{singular} if the
pair $(n,n)$ is singular, that is, if $\gcd(n,\phi(n))=1$. We
now show that for each non-singular pair $(m,n)$ of positive
integers there exist a non-abelian exact $(m,n)$-bicyclic group.

\begin{example}\label{EXP2}
Let $m$ and $n$ be positive integers with
$\gcd(n,\phi(m))\not=1$ where $\phi$ is the Euler's totient
function. It is well known that for $x\in \mathbb{Z}_m$ the
assignment $1\mapsto x$ extends to an automorphism of
$\mathbb{Z}_m$ if and only if $\gcd(x,m)=1$, and thus
$|\Aut(\mathbb{Z}_m)|=\phi(m).$ Since $\gcd(n,\phi(m))\neq 1$,
there exists a prime $p$ which divides both $n$ and $\phi(m)$.
Let $r$ denote the multiplicative order of $p$ in
$\mathbb{Z}_m$. Then $r\not\equiv 1\pmod{m}$ and
$r^p\equiv1\pmod{m}$. Define a group $G$ with a presentation
\[
G=\langle a,b\mid a^{m}=b^{n}=1, b^{-1}ab=a^{r}\rangle.
 \]

By H\"older's theorem~\cite[Chapter 7]{Johnson}, $G$ is a well-defined metacyclic group of
order $mn$. Since $r\not\equiv1\pmod{m}$, the group $G$ is non-abelian.
Thus, whenever $\gcd(n,\phi(m))\not=1$, there always
exists at least one non-abelian exact $(m,n)$-bicyclic group $G$.
Similarly, if $\gcd(m,\phi(n))\not=1$, there also
exists a non-abelian exact $(m,n)$-bicyclic group of order
$mn$.

We remark that the argument used here is different
from the one employed in the proof of Lemma~3.1 in \cite{{FL2017}}.
\end{example}

We now apply our theory to proving the following theorem.

\begin{theorem}\label{ABEL}
The following statements are equivalent for every pair of
positive integers $m$ and $n$:
\begin{itemize}
\item[{\rm (i)}] Every product of a cyclic
group of order $m$ with a cyclic group of order $n$ is abelian.
\item[{\rm (ii)}] The pair $(m,n)$ is singular.
\end{itemize}
\end{theorem}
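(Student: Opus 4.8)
The plan is to prove the equivalence (i) $\Leftrightarrow$ (ii) by treating the two implications separately. The implication (i) $\Rightarrow$ (ii) is the easy one, and I would argue it by contraposition: if $(m,n)$ is not singular, then $\gcd(m,\phi(n))\neq 1$ or $\gcd(n,\phi(m))\neq 1$, and Example~\ref{EXP2} already exhibits a non-abelian exact $(m,n)$-bicyclic group; since an exact bicyclic group is in particular a product of a cyclic group of order $m$ with one of order $n$, statement (i) fails. Thus nothing new is needed here.

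The substance lies in (ii) $\Rightarrow$ (i): assuming $(m,n)$ singular, I must show that every group $G=AB$ with $A=\langle a\rangle$ cyclic of order $m$ and $B=\langle b\rangle$ cyclic of order $n$ is abelian. I would split this into two phases: first prove that $G$ is nilpotent, then deduce that it is abelian. The second phase is short. A nilpotent $G$ is the direct product of its Sylow subgroups $G_p$, and each $G_p$ is the product of the (cyclic) Sylow $p$-subgroups of $A$ and $B$. If $p$ divides only one of $m,n$, then $G_p$ coincides with that cyclic Sylow subgroup and is abelian; if $p\mid\gcd(m,n)$, then singularity forces $p\parallel m$ and $p\parallel n$ (otherwise $p$ would divide $\gcd(n,\phi(m))$ or $\gcd(m,\phi(n))$), so $|G_p|=p^2$ and $G_p$ is abelian. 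Hence every $G_p$ is abelian and so is $G$.

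The crux is therefore the nilpotency of $G$, which I would establish by induction on $|G|$. By Huppert's theorem~\cite{Huppert1953} a product of two cyclic groups is supersolvable, so $G$ has a minimal normal subgroup $N$ of prime order $p$, and every quotient of $G$ is again a product of two cyclic groups. The key claim is that $N\leq Z(G)$. Since $|N|=p$, conjugation gives $G/C_G(N)\hookrightarrow\Aut(N)\cong\mathbb{Z}_{p-1}$, and because $G=AB$ this cyclic image is generated by the images of $A$ and $B$, of orders dividing $\gcd(m,p-1)$ and $\gcd(n,p-1)$. I would show both images are trivial. For the image of $A$: if $p\mid n$ then $(p-1)\mid\phi(n)$, so $\gcd(m,p-1)\mid\gcd(m,\phi(n))=1$; if instead $p\nmid n$, then $p\mid m$ and the Sylow $p$-subgroup of $A$ is already a Sylow $p$-subgroup of $G$, whence $N\leq O_p(G)\leq A$ and the abelian group $A$ centralises $N$. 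Either way $A\leq C_G(N)$, and symmetrically $B\leq C_G(N)$, so $N\leq Z(G)$. Then $G/N$ is a product of two cyclic groups with singular parameters (divisors of a singular pair stay singular, using $\phi(k)\mid\phi(\ell)$ whenever $k\mid\ell$), hence nilpotent by induction, and a central extension of a nilpotent group by $N\leq Z(G)$ is nilpotent; so $G$ is nilpotent.

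The main obstacle, and the place where the full strength of singularity is used, is exactly this centrality claim $N\leq Z(G)$, in particular the case of a common prime $p\mid\gcd(m,n)$, where neither $A$ nor $B$ need be normal in $G$ and the naive automorphism bound on a single factor fails. It is resolved by playing the two conditions $\gcd(m,\phi(n))=1$ and $\gcd(n,\phi(m))=1$ against each other together with Huppert's supersolvability; once nilpotency is in hand, the passage to abelianness is routine. I note that this argument proves (ii) $\Rightarrow$ (i) for all products $AB$ at once, with no need to separate the exact case $\langle a\rangle\cap\langle b\rangle=1$ from the general one, and independently of the skew-morphism correspondence of Section~3, which instead serves to translate Theorem~\ref{ABEL} into the uniqueness statement for dessins.
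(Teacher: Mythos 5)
Your proof is correct, and it takes a genuinely different route from the paper's, even though both share the same skeleton for the easy direction (non-singular $\Rightarrow$ a non-abelian metacyclic group exists, via Example~\ref{EXP2}) and both run an induction on $|G|$ starting from Huppert--Douglas supersolvability. The paper works top-down: it takes the \emph{largest} prime $p$ dividing $|G|$, uses supersolvability to make the whole Sylow $p$-subgroup $P$ normal, splits $G=P\rtimes Q$ by Schur--Zassenhaus, and proves directly that $P$ is central; the delicate case is $P\cong\mathbb{Z}_p\times\mathbb{Z}_p$ (when $p$ divides both $m$ and $n$ but not $|A\cap B|$), which the paper handles by viewing $P$ as an $\mathbb{F}_p$-plane and trapping the image $\overline{G}\le\mathrm{GL}(2,p)$ inside a line stabiliser of order $p(p-1)^2$. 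You instead work bottom-up: you pass to a \emph{minimal} normal subgroup $N$ of prime order, prove $N\le Z(G)$ via the embedding $G/C_G(N)\hookrightarrow\Aut(N)\cong\mathbb{Z}_{p-1}$ combined with the $N\le O_p(G)\le A$ observation when $p\nmid n$, and you only claim nilpotency from the induction, deferring abelianness to a final Sylow-decomposition step where every $G_p=A_pB_p$ has order $p$, $p^2$, or is cyclic. Your route buys elementarity: it avoids Schur--Zassenhaus and the $\mathrm{GL}(2,p)$ counting entirely, replacing them by the standard facts that groups of order $p^2$ are abelian and that a central extension of a nilpotent group is nilpotent; the centrality argument also treats $A$ and $B$ symmetrically instead of splitting into the paper's two cases. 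What the paper's route buys is immediacy: each inductive step yields a direct-product decomposition $G=P\times Q$ with $P$ abelian, so no separate nilpotent-to-abelian pass is needed. Two trivial slips to fix: in your last step $G_p=A_pB_p$ has order $p$ \emph{or} $p^2$ (the two Sylow subgroups may coincide when $p$ divides $|A\cap B|$), though abelianness follows either way; and supersolvability of bicyclic groups should be credited to Huppert~\cite{Huppert1953-1} and Douglas~\cite{Douglas1961}, not to~\cite{Huppert1953}.
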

\begin{proof}
If (i) holds, then by virtue of Example~\ref{EXP2} the pair
$(m,n)$ must be singular. For the converse, assume that the pair $(m,n)$
is singular and that $G$ is an $(m,n)$-bicyclic group. We prove
the statement by using induction on the size of $|G|$. By a
result of Huppert~\cite{Huppert1953-1} and Douglas
\cite{Douglas1961} (see also \cite[VI.10.1]{Huppert1967}), $G$
is supersolvable, so for the largest prime factor $p$ of $|G|$
the Sylow $p$-subgroup $P$ of $G$ is normal in (see
$G$~\cite[VI.9.1]{Huppert1967}). By the Schur-Zassenhaus
theorem, $G=P\rtimes Q$, a semidirect product of $P$ by $Q$,
where $Q$ is a subgroup of order $|G/P|$ in $G$. To proceed we
distinguish two cases.

\medskip\noindent
\textbf{Case~1.} \textit{$p$ divides exactly one of $m$ and $n$.}
Without loss of generality we may assume that $p\mid m$ and
$p\nmid n$. Let us write $m$ in the form $m=p^em_1$ where
$p\nmid m_1$. Then the normal subgroup $P$ is contained in the
cyclic factor $A=\langle a\rangle$ of $G$ of order $m$, so
$P=\langle a^{m_1}\rangle$. The generator $b$ of the cyclic
factor $B=\langle b\rangle$ of order $n$ induces an
automorphism $a^{m_1}\mapsto (a^{m_1})^r$ of $P$ by conjugation
$b^{-1}a^{m_1}b=(a^{m_1})^r$ where $r$ is an integer coprime to
$p$. It follows that the multiplicative order $|r|$ of $r$ in
$\mathbb{Z}_{p^e}$ divides $|\Aut(P)|=\phi(p^e)$. On the other
hand, $a^{m_1}=b^{-n}a^{m_1}b^n=(a^{m_1})^{r^n}$, so
$r^n\equiv1\pmod{p^e}$, and hence $|r|$ also divides $n$. But
$\phi(p^e)$ divides $\phi(m)$ and $\gcd(n,\phi(m))=1$, so
$r\equiv1\pmod{p^e}$. Therefore $P$ is contained in the centre
of $G$, and hence $G=P\times Q$, where $Q$ is an
$(m_1,n)$-bicyclic group. It is evident that the pair $(m_1,n)$ is also
singular. By induction, $Q$ is abelian, and therefore $G$ is
abelian.

\medskip\noindent
\textbf{Case~2.} \textit{$p$ divides both $m$ and $n$.}
Since $(m,n)$ is a singular pair, $p^2\nmid m$ and $p^2\nmid
n$. Thus $m=pm_1$ and $n=pn_1$ where $p\nmid m_1$, $p\nmid n_1$
and $\gcd(m_1,p(p-1))=\gcd(n_1,p(p-1))=1$. Since
$|G|=|AB|=|A||B|/|A\cap B|,$ the Sylow $p$-subgroup $P$ of $G$
is of order $p$ or $p^2$. If $p$ divides $|A\cap B|$, then
$|P|=p$ and so $P\leq A\cap B$, which is central in $G$.
Therefore, $G=P\times Q$, where $Q$ is an $(m_1,n_1)$-bicyclic
group, and the result follows by induction. Otherwise, $p\nmid
|A\cap B|$, so $P\cong\mathbb{Z}_p\times \mathbb{Z}_p$. We may
view $P$ as a $2$-dimensional vector space over the Galois
field $\mathbb{F}_p$. Let $\Omega$ be the set of
$1$-dimensional subspaces of $P$. Then $|\Omega|=p+1$ and
$\alpha=\langle a^{m_1}\rangle$ belongs to $\Omega$. Consider
the action of $G$ on $P$ by conjugation. The kernel of this
action is $C_G(P)$, so $\overline{G}=G/C_G(P)\leq {\rm
GL}(2,p)$ where $C_G(P)$ denotes the centraliser of $P$ in $G$.
Now we claim that $\overline{G}=1$.

Suppose to the contrary that $\overline{G}\not=1$. Since
$G=\langle a,b\rangle$, we have $\overline{G}=\langle
\overline{a}^p,\overline{b}^p\rangle$, where
$\overline{a}^p=a^pC_G(P)$ and $\overline{b}^p=b^pC_G(P)$.
Hence at least one of  $\overline{a}^p$ and $\overline{b}^p$ is
not the identity of $\overline{G}$, say $\overline{a}^p\not=1$.
Clearly, $|\overline{a}^p|$ divides $m_1$, the order of $a^p$
in $G$.

Note that $\Omega$ is a complete block system of ${\rm
GL}(2,p)$ on $P$ and the induced action of ${\rm GL}(2,p)$ on
$\Omega$ is transitive. By the Frattini argument, $|{\rm
GL}(2,p)|=(p+1)|{\rm GL}(2,p)_\alpha|$, and hence $|{\rm
GL}(2,p)_\alpha|=p(p-1)^2$ as $|{\rm GL}(2,p)|=p(p+1)(p-1)^2$.
On the other hand, $\overline{a}^p$ fixes $\alpha$ as $a$ fixes
the subspace $\langle a\rangle$, implying that $\overline{a}^p\in
{\rm GL}(2,p)_\alpha$. It follows that $|\overline{a^p}|$ divides
$p(p-1)^2$. Since  $|\overline{a^p}|$ divides $m_1$ and
$\gcd(m_1,p(p-1))=1$, we have $|\overline{a^p}|=1$, which is
impossible because $\overline{a}^p\not=1$. Thus
$\overline{G}=1$, as claimed.

Since $\overline{G}=1$, we have $G=C_G(P)$,  and hence
$G=P\times Q$, where $Q=\langle a^{p}\rangle\langle
b^p\rangle$ is an $(m_1,n_1)$-bicyclic group with the pair $(m_1,n_1)$
being singular. The statement now follows by induction.
\end{proof}

The following result easily follows from Theorem~\ref{ABEL}.

\begin{corollary}\label{exactABEL}
Let $m$ and $n$ be positive integers. Then every
group factorisable as an exact product of cyclic subgroups
of orders $m$ and $n$ is abelian if and only if
the pair $(m,n)$ is singular.
\end{corollary}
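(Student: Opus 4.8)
The plan is to obtain the corollary as a direct consequence of Theorem~\ref{ABEL} together with the explicit construction in Example~\ref{EXP2}, since the only new feature here is the restriction from arbitrary products of the two cyclic groups to \emph{exact} products. Because an exact product is a fortiori a product, one of the two implications will be immediate, and for the other I need only check that the non-abelian witnesses already built in Example~\ref{EXP2} happen to be exact.

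For the backward implication I would argue as follows. Suppose the pair $(m,n)$ is singular. Every exact product of cyclic groups of orders $m$ and $n$ is in particular a product of such groups, so the implication (ii)$\Rightarrow$(i) of Theorem~\ref{ABEL} immediately guarantees that it is abelian. This direction therefore requires no additional work beyond quoting Theorem~\ref{ABEL}.

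The forward implication I would handle by contraposition: assuming $(m,n)$ is not singular, I exhibit a non-abelian \emph{exact} product. The main point is simply to observe that the metacyclic groups constructed in Example~\ref{EXP2} are already exact. Indeed, when $\gcd(n,\phi(n))\neq 1$ fails in the sense that $\gcd(n,\phi(m))\neq 1$ (the case $\gcd(m,\phi(n))\neq 1$ being symmetric), Example~\ref{EXP2} produces a non-abelian group $G=\langle a\rangle\langle b\rangle$ of order $mn$ with $|a|=m$ and $|b|=n$. From $|G|=|\langle a\rangle\langle b\rangle|=|\langle a\rangle|\,|\langle b\rangle|/|\langle a\rangle\cap\langle b\rangle|$ together with $|G|=mn=|\langle a\rangle|\,|\langle b\rangle|$ one reads off $|\langle a\rangle\cap\langle b\rangle|=1$, so the factorisation is exact. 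Hence whenever $(m,n)$ fails to be singular there is a non-abelian exact $(m,n)$-bicyclic group, which is precisely the contrapositive of the forward implication.

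I do not anticipate a genuine obstacle here: all the difficulty has already been absorbed into Theorem~\ref{ABEL}, whose inductive proof treats the general (not necessarily exact) case, and into the presentation-and-order bookkeeping of Example~\ref{EXP2}. The only point demanding care is confirming that the counterexamples of Example~\ref{EXP2} are exact products rather than merely products; the order count above settles this at once, so the corollary follows.
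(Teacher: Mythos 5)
Your proof is correct and matches the paper's intended argument: the paper gives no separate proof, noting only that the corollary \enquote{easily follows} from Theorem~\ref{ABEL}, with the singular direction being immediate (an exact product is a product) and the converse supplied by the non-abelian exact $(m,n)$-bicyclic groups of Example~\ref{EXP2}, whose exactness the example already asserts and which your order-counting argument correctly verifies.
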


We summarise the results of this section in the following
theorem.

\begin{theorem}\label{the:uniq}
The following statements are equivalent for any pair of
positive integers $m$ and $n$:
\begin{itemize}
\item[\rm(i)] The pair $(m,n)$ is singular.

\item[\rm(ii)]Every finite group factorisable as a product
    of two cyclic subgroups of orders $m$ and $n$ is
    abelian.

\item[\rm(iii)] Every finite group factorisable as an exact product of two cyclic groups of
    orders $m$ and $n$ is isomorphic to $\mathbb Z_m\times
    \mathbb Z_n$.

\item[\rm(iv)]  There is only one $(m,n)$-reciprocal pair of
    skew-morphisms $(\varphi,{\varphi^*})=(\mathrm{id}_n,\mathrm{id}_m)$ of
    the cyclic groups $\mathbb{Z}_n$ and $\mathbb{Z}_m$.

\item[\rm(v)] Up to reciprocality, there is a unique
    isomorphism class of regular dessins whose underlying
    graph is the complete bipartite graph
    $K_{m,n}$.

\item[\rm(vi)] There exists a unique orientable
    edge-transitive embedding of $K_{m,n}$.

\end{itemize}
\end{theorem}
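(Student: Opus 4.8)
The plan is to prove Theorem~\ref{the:uniq} by establishing the chain of equivalences (i)$\Leftrightarrow$(ii)$\Leftrightarrow$(iii)$\Leftrightarrow$(iv)$\Leftrightarrow$(v), together with the separate equivalence of (vi) to the rest. The heavy lifting has in fact already been done in the preceding material: Theorem~\ref{ABEL} gives (i)$\Leftrightarrow$(ii) outright, and Corollary~\ref{exactABEL} gives the exact-product version. So my first move is to cite these directly rather than reprove them. The genuinely new content here is gluing the algebraic statements (i)--(iii) to the combinatorial/skew-morphism statements (iv)--(vi) via the correspondence theorem.

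The core of the argument is the equivalence (iii)$\Leftrightarrow$(iv)$\Leftrightarrow$(v). First I would observe that (iii) and (v) are essentially restatements of each other through Theorem~\ref{thm:corresp-all}: isomorphism classes of $(m,n)$-complete regular dessins correspond bijectively to equivalence classes of exact $(m,n)$-bicyclic triples, and the ``up to reciprocality'' clause in (v) accounts for the colour-swap. Thus there is a unique dessin up to reciprocality precisely when the only exact $(m,n)$-bicyclic group is $\mathbb{Z}_m\times\mathbb{Z}_n$, which is (iii). For (iv), I would invoke the third leg of Theorem~\ref{thm:corresp-all}, namely the bijection between equivalence classes of exact bicyclic triples and $(m,n)$-reciprocal pairs of skew-morphisms. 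By Example~\ref{EXP4}, the direct product $\mathbb{Z}_m\times\mathbb{Z}_n$ yields exactly the trivial reciprocal pair $(\mathrm{id}_n,\mathrm{id}_m)$; so (iii) (the direct product is the only such group) is equivalent to (iv) (the trivial pair is the only reciprocal pair). This completes the cycle among the central statements, and combined with Corollary~\ref{exactABEL} ties them to (i).

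The one piece requiring slightly more care is the equivalence of (vi) with the others, since (vi) speaks of edge-transitive rather than regular embeddings. Here I would argue that for the complete bipartite graph $K_{m,n}$, any orientably edge-transitive embedding is automatically regular in the relevant sense: the edge-transitivity of the orientation-preserving automorphism group on an embedded $K_{m,n}$, whose edge set carries a free action, forces regularity, so an edge-transitive embedding is the same object as a regular dessin up to the colour distinction. Once this identification is made, (vi) collapses to (v). I should be careful to phrase this so that the ``unique'' in (vi) matches the ``unique up to reciprocality'' in (v); the point is that an \emph{unoriented} edge-transitive embedding of $K_{m,n}$ does not distinguish the two vertex colours, which is exactly the reciprocality identification.

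I expect the main obstacle to be the rigorous justification that orientable edge-transitivity implies regularity for embeddings of $K_{m,n}$, and the matching of the two notions of uniqueness between (v) and (vi); the algebraic equivalences (i)--(iv) are, by contrast, almost immediate once Theorem~\ref{ABEL}, Theorem~\ref{thm:corresp-all}, and Example~\ref{EXP4} are in hand. A secondary subtlety is confirming that (iii) really names $\mathbb{Z}_m\times\mathbb{Z}_n$ specifically and not merely ``an abelian group''; but since an exact product of two cyclic groups that is abelian must be their internal direct product, hence isomorphic to $\mathbb{Z}_m\times\mathbb{Z}_n$, this is a short step from Corollary~\ref{exactABEL}.
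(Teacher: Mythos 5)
Your handling of (i)--(v) is correct and coincides with the paper's own route: the paper likewise obtains (i)$\Leftrightarrow$(ii) from Theorem~\ref{ABEL}, passes to (iii) via Corollary~\ref{exactABEL} together with the observation that an abelian exact product of two cyclic subgroups is their internal direct product, and gets (iii)$\Leftrightarrow$(iv)$\Leftrightarrow$(v) from Theorem~\ref{thm:corresp-all} combined with Examples~\ref{EXP1} and~\ref{EXP4}. The divergence is at item (vi): the paper does \emph{not} prove this equivalence at all, but outsources it, citing \cite[Theorem~1.1]{FL2017} for (i)$\Leftrightarrow$(iii)$\Leftrightarrow$(vi). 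You attempt to prove it directly, which is more ambitious than the paper, but your argument for it contains a genuine gap.

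The gap is exactly the step you flag as ``the main obstacle''. Your stated mechanism --- that edge-transitivity of the orientation-preserving automorphism group of an embedded $K_{m,n}$, ``whose edge set carries a free action'', forces regularity --- is false as a general statement. The group that acts freely (semi-regularly) on edges is the \emph{colour-preserving} orientation-preserving group $\Aut(\mathcal{D})$, not the full group $\Aut^+(M)$. When $m=n$ the full group may contain colour-swapping automorphisms, and such automorphisms can fix an edge while reversing it. The standard embedding of $K_{n,n}$ already refutes your mechanism: it is orientably regular, so $\Aut^+(M)$ has order $2n^2$, is edge-transitive, and certainly does not act freely on the $n^2$ edges. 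Consequently, edge-transitivity of $\Aut^+(M)$ does not formally transfer to the colour-preserving subgroup, and one must analyse three cases for $m=n$: (a) $\Aut^+(M)$ colour-preserving, which is the regular dessin case; (b) $\Aut^+(M)$ arc-transitive, where one can still deduce dessin-regularity because vertex stabilisers (full local rotations) preserve colours and the index-$2$ colour-preserving subgroup is transitive on each colour class; and (c) $\Aut^+(M)$ acting \emph{regularly on edges} while containing colour-swapping elements, in which case the colour-preserving subgroup has index $2$ and two edge-orbits, so the map is edge-transitive but is \emph{not} a regular dessin. Case (c) is precisely what makes Fan--Li's theorem non-trivial, and your proposal never addresses it. It can in fact be handled: in case (c) the dessin group has order $n^2/2$, forcing $n$ even, whereas a singular pair $(n,n)$ with $n>2$ has $n$ odd, so case (c) cannot occur under hypothesis (i)/(v); together with the (correct) observation that non-reciprocal dessins yield non-isomorphic uncoloured embeddings, this would complete (v)$\Leftrightarrow$(vi). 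But none of this parity analysis, nor the case distinction itself, appears in your proposal, so the equivalence involving (vi) remains unproven as written.
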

The proof of the equivalence between items (i), (iii) and (vi) of
Theorem~\ref{the:uniq} can be found in \cite[Theorem
1.1]{FL2017}.

\begin{remark}
For a fixed positive integer $x$, it has been recently shown by
Nedela and Pomerance \cite{NP2017} that the number of singular
pairs $(m,n)$ with $m,n\leq x$ is asymptotic to $z(x)^2$ where
\[
z(x)=e^{\gamma}\frac{x}{\log\log\log x},
\]
where $\gamma$ is  Euler's constant.
\end{remark}

\section{The symmetric case}
Recall that a complete regular dessin $\mathcal{D}=(G;a,b)$ is
symmetric if $G$ has an automorphism transposing $a$ and $b$.
In this case  the dessin $\mathcal{D}$ possesses an external
symmetry transposing the colour-classes. If we ignore the
vertex-colouring, the dessin can be regarded as an orientably
regular map with underlying graph $K_{n,n}$.  As a consequence
of Theorem~\ref{thm:corresp-all} we obtain the following
correspondence between regular embeddings of the complete
bipartite graphs $K_{n,n}$ and symmetric skew-morphisms of
$\mathbb{Z}_n$, partially indicated by Kwak and Kwon already in
\cite[Lemma 3.5]{KK2005}.

\begin{corollary}\label{cor:KNNSM}
The isomorphism classes of regular embeddings of complete
bipartite graphs $K_{n,n}$ are in one-to-one correspondence
with the symmetric skew-morphisms of $\mathbb{Z}_n$.
\end{corollary}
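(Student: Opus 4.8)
The plan is to establish Corollary~\ref{cor:KNNSM} by combining the master correspondence of Theorem~\ref{thm:corresp-all} with the observation that symmetric objects on each side match up. A regular embedding of $K_{n,n}$ is, by definition, an orientably regular map whose underlying graph is $K_{n,n}$; ignoring colours, such an embedding is precisely a symmetric $(n,n)$-complete regular dessin, i.e.\ a dessin $\mathcal{D}=(G;a,b)$ equipped with an automorphism of $G$ transposing $a$ and $b$. So the first step is to make this identification precise: regular embeddings of $K_{n,n}$ correspond bijectively to symmetric complete regular dessins with underlying graph $K_{n,n}$, where we identify a dessin with its reciprocal (the external symmetry swapping colour-classes is exactly what is forgotten when we drop the bipartite structure).

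Next I would invoke Theorem~\ref{thm:corresp-all} to pass from symmetric complete regular dessins to $(n,n)$-reciprocal pairs of skew-morphisms. The key point is that under this correspondence, a dessin is symmetric if and only if the associated reciprocal pair $(\varphi,\varphi^*)$ satisfies $\varphi=\varphi^*$, that is, the pair is symmetric in the sense defined after Definition~\ref{def:recipr}. One direction is already supplied by Proposition~\ref{prop:bic2recip}: if $G$ has an automorphism transposing $a$ and $b$, then $\varphi=\varphi^*$. For the converse, I would argue that if $(\varphi,\varphi)$ is a symmetric reciprocal pair, then the exact bicyclic triple $(G;a,b)$ reconstructed via Proposition~\ref{prop:recip2bic} admits an automorphism swapping $a$ and $b$: indeed, the symmetry $\varphi=\varphi^*$ forces the commuting rules $ab^x=b^{\varphi(x)}a^{\pi(x)}$ and $ba^y=a^{\varphi(y)}b^{\pi(y)}$ to be interchanged by the map $a\leftrightarrow b$, so this map extends to a group automorphism. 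Thus symmetric reciprocal pairs correspond exactly to symmetric dessins.

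Finally, a symmetric reciprocal pair $(\varphi,\varphi)$ is completely determined by its single component $\varphi$, which by definition is a symmetric skew-morphism of $\mathbb{Z}_n$: one for which $|\varphi|$ divides $n$ and $\pi(x)=-\varphi^{-x}(-1)$ is a power function. This gives a bijection between symmetric reciprocal pairs and symmetric skew-morphisms of $\mathbb{Z}_n$, and composing the three bijections yields the desired one-to-one correspondence between isomorphism classes of regular embeddings of $K_{n,n}$ and symmetric skew-morphisms of $\mathbb{Z}_n$.

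I expect the main obstacle to be the careful bookkeeping in the first step, namely verifying that forgetting the vertex-colouring sets up a genuine bijection between regular embeddings of $K_{n,n}$ and isomorphism classes of symmetric complete regular dessins, rather than, say, a two-to-one map. One must check that two symmetric dessins give isomorphic maps precisely when they are equivalent as coloured dessins, and that every orientably regular embedding of $K_{n,n}$ arises this way; this amounts to confirming that the external colour-swapping symmetry is exactly the extra structure distinguishing a map from a dessin, and that no regular embedding of $K_{n,n}$ fails to be colourable as a symmetric dessin. The skew-morphism side of the argument, by contrast, is essentially immediate from the definitions and the results already proved in Section~3.
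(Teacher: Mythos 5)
Your proposal is correct and takes essentially the same route as the paper: the paper states this corollary without a separate proof, presenting it as a direct consequence of Theorem~\ref{thm:corresp-all} together with the identification, already made in Section~2, of symmetric dessins with orientably regular bipartite maps, which is precisely the chain of bijections you assemble (regular embeddings of $K_{n,n}$ $\leftrightarrow$ symmetric $(n,n)$-complete dessins $\leftrightarrow$ symmetric reciprocal pairs $(\varphi,\varphi)$ $\leftrightarrow$ symmetric skew-morphisms). Your filling-in of the converse step---that a symmetric pair $(\varphi,\varphi)$ produces a triple $(G;a,b)$ admitting an automorphism swapping $a$ and $b$, since with $\varphi=\varphi^*$ one also has $\pi=\pi^*$ and the commuting rules (together with $a^n=b^n=1$) form an $a\leftrightarrow b$-symmetric presentation of $G$---is a detail the paper leaves implicit, and it is sound.
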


A complete classification of orientably regular embeddings of
complete bipartite graphs $K_{n,n}$ has already been
accomplished by Jones et al. in a series of papers
\cite{DJKNS2007, DJKNS2010, DJKNS2013, Jones2010, JNS2007,
JNS2008, NSZ}. The methods used in the classification rely on the
analysis of the structure of the associated exact bicyclic groups. A
different approach to the classification can be taken on the basis
of Corollary~\ref{cor:KNNSM} via determining the corresponding
symmetric skew-morphisms of $\mathbb{Z}_n$. In particular, we
can reformulate Theorem A of \cite{JNS2008} as follows:

\begin{corollary}
The following statements are equivalent for every positive integer $n$:
\begin{itemize}
\item[\rm(i)] The integer $n$ is singular.
\item[\rm(ii)] Every finite group factorisable as a product
    of two cyclic subgroups of order $n$ is abelian.
\item[\rm(iii)] Every finite group  factorisable as an
    exact product of two cyclic subgroups of order $n$ is
    isomorphic to $\mathbb Z_n\times \mathbb Z_n$.
\item[\rm(iv)] The cyclic group $\mathbb{Z}_n$ has only one
    symmetric skew-morphism.
\item[\rm(v)] Up to isomorphism, the complete bipartite
    graph $K_{n,n}$ has a unique orientably regular embedding.
\end{itemize}
\end{corollary}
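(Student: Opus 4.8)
The plan is to deduce all five equivalences from the machinery already assembled, reducing everything to the special case $m=n$. First I would observe that statements (i), (ii) and (iii) are exactly items (i), (ii) and (iii) of Theorem~\ref{the:uniq} read with $m=n$, since the pair $(n,n)$ is singular precisely when $\gcd(n,\phi(n))=1$, i.e.\ when $n$ is singular; their mutual equivalence is therefore already in hand. Next, the equivalence of (iv) and (v) is immediate from Corollary~\ref{cor:KNNSM}, which sets up a bijection between isomorphism classes of orientably regular embeddings of $K_{n,n}$ and symmetric skew-morphisms of $\mathbb{Z}_n$: one class is unique exactly when the other is. So the whole corollary reduces to linking the symmetric datum (iv)/(v) to the group-theoretic datum (i)/(iii).

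The forward link is soft. By Theorem~\ref{thm:corresp-all}, statement (iii) says that the only $(n,n)$-reciprocal pair of skew-morphisms is $(\mathrm{id}_n,\mathrm{id}_n)$, the pair attached to the standard dessin in Example~\ref{EXP4}. A skew-morphism $\varphi$ of $\mathbb{Z}_n$ is symmetric precisely when $(\varphi,\varphi)$ is a reciprocal pair; hence under (iii) the only symmetric skew-morphism is $\mathrm{id}_n$, giving (iv). In particular the identity is always symmetric, so the standard embedding is always present and (v) is a genuine uniqueness assertion rather than an existence one.

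The reverse link is the crux, and I expect it cannot be obtained by purely formal manipulation: I would have to show that if $n$ is non-singular then $\mathbb{Z}_n$ carries a symmetric skew-morphism other than $\mathrm{id}_n$, equivalently that there is a non-abelian exact $(n,n)$-bicyclic group admitting an automorphism transposing its two distinguished generators. Crucially, the metacyclic group produced in Example~\ref{EXP2} will \emph{not} serve here, because in it one cyclic factor is normal while the other generally is not, so no swapping automorphism can exist; a genuinely symmetric construction is needed. The hard part is therefore to exhibit, for a prime $p\mid\gcd(n,\phi(n))$, a symmetric non-standard embedding, distinguishing the cases $p^2\mid n$ and $q\equiv 1\pmod{p}$ for some prime $q\mid n$, since these are the two ways $p$ can divide $\phi(n)$. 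This is exactly the content of Theorem~A of \cite{JNS2008}; invoking that theorem (or reproducing its explicit symmetric bicyclic groups) supplies a non-identity symmetric skew-morphism whenever $n$ is non-singular, which is the contrapositive of $\mathrm{(iv)}\Rightarrow\mathrm{(i)}$ and closes the cycle of implications.
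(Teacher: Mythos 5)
Your proposal is correct and takes essentially the same approach as the paper: the paper gives no independent proof, presenting this corollary precisely as a reformulation of Theorem A of \cite{JNS2008} through the correspondence of Corollary~\ref{cor:KNNSM} (i.e., Theorem~\ref{thm:corresp-all} in the case $m=n$). Your assembly of the pieces --- Theorem~\ref{the:uniq} with $m=n$ for (i)--(iii), Corollary~\ref{cor:KNNSM} for (iv)$\Leftrightarrow$(v), the identity-pair correspondence for (iii)$\Rightarrow$(iv), and \cite{JNS2008} for the one genuinely hard implication (iv)$\Rightarrow$(i) --- is exactly the intended reading, including your correct observation that the metacyclic group of Example~\ref{EXP2} cannot serve in the symmetric case because only one of its cyclic factors is normal.
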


Although skew-morphisms are implicitly present in the structure
of the automorphism groups of the maps, the way how to find
them explicitly is not at all clear. This leads us to
formulating the following problems for future investigation.

\begin{problem}\label{prob:skewknn}
Determine the symmetric skew-morphisms of cyclic groups by
means of explicit formulae.
\end{problem}

\begin{problem}\label{prob:classregmap}
Classify all orientably regular embeddings of complete
bipartite graphs $K_{n,n}$ in terms of the corresponding
symmetric skew-morhisms.
\end{problem}

The previous problem suggests the following natural question:
under what conditions a symmetric skew-morphism is a group
automorphism and what are the corresponding orientably regular
maps? The following result determines these skew-morphisms
explicitly.

\begin{theorem}
Let $\varphi\colon x\mapsto rx$ be an automorphism of
$\mathbb{Z}_n$ of order $m$, where $\gcd(r,n)=1$. Then
$\varphi$ is a symmetric skew-morphism of $\mathbb{Z}_n$ if and
only if $m\mid n$ and $r\equiv1\pmod{m}$.
\end{theorem}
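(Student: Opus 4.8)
The plan is to unwind the definition of a symmetric skew-morphism directly. Recall from the remark following Definition~\ref{def:recipr} that a skew-morphism $\varphi$ of $\mathbb{Z}_n$ is symmetric precisely when (a) its order $m=|\varphi|$ divides $n$, and (b) the function $\pi(x)=-\varphi^{-x}(-1)$ is a power function of $\varphi$. Since the map $\varphi\colon x\mapsto rx$ is a group automorphism here, I would first pin down its power function: from $\varphi(x+y)=\varphi(x)+\varphi(y)$ together with the skew-morphism identity $\varphi(x+y)=\varphi(x)+\varphi^{\pi(x)}(y)$ one gets $\varphi^{\pi(x)}=\varphi$, so the power function (unique modulo $m$) is the constant $\pi(x)\equiv1\pmod m$. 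Condition (a) then reads literally as $m\mid n$, already furnishing one of the two asserted conditions.

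The substance of the proof lies in condition (b). Because the power function is unique modulo $m$, (b) holds if and only if $-\varphi^{-x}(-1)\equiv1\pmod m$ for every $x\in\mathbb{Z}_n$. I would then compute $\varphi^{-x}(-1)$ using $\varphi^k(y)=r^k y$ for all integers $k$, where $r^{-1}$ denotes the inverse of $r$ modulo $n$ (which exists since $\gcd(r,n)=1$); this gives $\varphi^{-x}(-1)=-r^{-x}$ and hence $-\varphi^{-x}(-1)=r^{-x}$ in $\mathbb{Z}_n$. So (b) collapses to the requirement $r^{-x}\equiv1\pmod m$ for all $x$. Taking $x=1$ forces $r\equiv1\pmod m$, and conversely $r\equiv1\pmod m$ instantly yields $r^{-x}\equiv1\pmod m$ for every $x$. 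Together with (a), this is exactly the claimed equivalence.

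The single delicate point---and the step I expect to be the main, if modest, obstacle---is the interplay between residues modulo $n$ and modulo $m$. The quantity $-\varphi^{-x}(-1)$ lives naturally in $\mathbb{Z}_n$, whereas as a value of the power function it is only meaningful modulo $m$; the reduction $\mathbb{Z}_n\to\mathbb{Z}_m$ is well defined precisely because (a) guarantees $m\mid n$, and $r^{-x}\bmod m$ makes sense because $\gcd(r,n)=1$ forces $\gcd(r,m)=1$. I would therefore establish (a) before analysing (b), so that the expression examined in (b) is unambiguous before it is reduced modulo $m$; once this bookkeeping is settled, the equivalence follows from the short computation above.
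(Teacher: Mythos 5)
Your proposal is correct and follows essentially the same route as the paper's proof: both reduce symmetry, via Definition~\ref{def:recipr}, to the conditions $m\mid n$ and $-\varphi^{-x}(-1)\equiv\pi(x)\equiv1\pmod{m}$, using that the power function of an automorphism is the constant $1$ modulo $m$, and then compute $-\varphi^{-x}(-1)=r^{-x}$. The only cosmetic difference is that you evaluate at $x=1$ (getting $r^{-1}\equiv1\pmod m$ and hence $r\equiv1\pmod m$) while the paper evaluates at $x=-1$ to get $r\equiv1\pmod m$ directly; your explicit attention to the well-definedness of the reduction modulo $m$ is a point the paper leaves implicit.
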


\begin{proof}
Note that the order of $\varphi$ is equal to the multiplicative
order of $r$ in $\mathbb{Z}_n$. Since
$|\Aut(\mathbb{Z}_n)|=\phi(n)$, we have $m\mid\phi(n)$.  Since
$\varphi$ is an automorphism, the associated power function is
$\pi(x)\equiv1\pmod{m}$ for all $x\in \mathbb{Z}_n$.

If $\varphi$ is symmetric, then by Definition~\ref{def:recipr},
$m\mid n$ and  $\pi(x)=-\varphi^{-x}(-1)\pmod{m}$ for all $x\in
\mathbb{Z}_n$. In particular,
$1=\pi(-1)\equiv-\varphi(-1)\equiv\varphi(1)\equiv r\pmod{m}$.

Conversely, assume that $m\mid n$ and $r\equiv 1 \pmod{m}$. By
Definition~1, it suffices to show that  $-\varphi^{-x}(-1)$ is
a power function of $\varphi$ where $x\in \mathbb{Z}_n$, that
is, to show that $-\varphi^{-x}(-1)\equiv 1\pmod{m}$. Since
$r\equiv 1 \pmod{m}$, we have
$-\varphi^{-x}(-1)=\varphi^{-x}(1)=r^{-x}\equiv 1 \pmod{m}$, as
required.
\end{proof}

Note that if the conditions $m\mid\phi(n)$ of the above theorem
are satisfied, then clearly $m\mid \gcd(n,\phi(n))$ where $\phi$
is the Euler's totient function.

\medskip

The following example shows that there exist symmetric
skew-morphisms of $\mathbb{Z}_n$ which are not automorphisms.

\begin{example}
The cyclic group $\mathbb{Z}_8$ has the total of six
skew-morphisms, four automorphisms and two that are not
automorphisms. The latter two are listed below along with the
corredsponding power functions:
\begin{align*}
\varphi&=(0)(1\,3\,5\,7)(2)(4)(6),&\pi_{\varphi}&=[1]~[3\,3\,3\,3]~[1]\,[1]~[1],
\\
\psi&=(0)(1\,7\,5\,3)(2)(4)(6), & \pi_{\psi}&=[1]~[3\,3\,3\,3]~[1]\,[1]~[1].
\end{align*}
It can be easily verified that all the six skew-morphisms are
symmetric, corresponding to the six non-isomorphic regular
embeddings of $K_{8,8}$ described in \cite[Table 1]{Jones2010}.
\end{example}


\section*{Acknowledgement}
The first author was supported by the Natural Science
Foundation of China (11571035, 11731002). The second author was
supported by Natural Science Foundation of Zhejiang Province
(LY16A010010). The third  author was supported by the grants
APVV-15-0220, VEGA 1/0150/14, Project LO1506 of the Czech
Ministry of Education, Youth and Sports. The fourth author was
supported by the grants APVV-15-0220 and VEGA 1/0813/18. The
fifth author was supported by Natural Science Foundation of
Zhejiang Province (LQ17A010003).




\begin{thebibliography}{99}
\bibitem{BJ2014} M. Bachrat\'y and R. Jajcay, Powers of
   skew-morphisms, in: Symmetries in  Graphs, Maps, and
   Polytopes, 5th SIGMAP Workshop, West Malvern, U.K., July 2014
   (J.\v{S}ir\'a\v{n} and R. Jajcay, eds.), Springer
  Proceedings in Mathematics \& Statistics 159 (2016), 1--26.

\bibitem{BJ2016} M. Bachrat\'y and R. Jajcay, Classification of
    coset-preserving skew-morphisms of finite cyclic groups,
    Austr. J. Combin. 67 (2017), 259--280.

\bibitem{Be79} G. V. Bely\v{\i}, Galois extensions of a maximal
    cyclotomic field, Izv. Akad. Nauk SSSR Ser. Mat.
    43 (1979), 267--276.

\bibitem{CJT2016} M. Conder, R. Jajcay and T. Tucker, Cyclic
    complements and skew morphisms of groups, J. Algebra
    453 (2016), 68--100.

\bibitem{CT2014} M. Conder and T. Tucker, Regular Cayley maps
    for cyclic groups, Trans. Amer. Math. Soc. 366
    (2014), 3585--3609.

\bibitem{CJSW2009} A. D. Costa, G. A. Jones, M. Streit and J.
    Wolfart, Generlised Fermat hypermaps and Galois orbits,
    Glasgow Math. 51 (2009), 289--299.

\bibitem{Douglas1961} J. Douglas, On the supersolvability of
    bicyclic groups, Proc. Natl. Acad. Sci. USA 49
    (1961), 1493--1495.

\bibitem{DJKNS2007} S.-F. Du, G. A. Jones, J. H. Kwak, R.
    Nedela and M. \v Skoviera, Regular embeddings of $K_{n,n}$
    where $n$ is a power of $2$. I: Metacyclic case,  European
    J. Combin. 28 (2007), 1595--1609.

\bibitem{DJKNS2010} S.-F. Du, G. A. Jones, J. H. Kwak, R.
    Nedela and M. \v Skoviera, Regular embeddings of $K_{n,n}$
    where $n$ is a power of $2$. II: The non-metacyclic case,
    European J. Combin. 31 (2010), 1946--1956.

\bibitem{DJKNS2013} S.-F. Du, G. A. Jones, J. H. Kwak, R.
    Nedela and M. \v Skoviera, $2$-Groups that factorize as
    productsof cyclic groups and regular embeddings of complete
    bipartite graphs, Ars Math. Contemporanea 6 (2013),
    155--170.

\bibitem{FL2017} W. Fan and C.H. Li, The complete bipartite
    graphs with a unique edge-transitive embedding, J.
    Graph Theory 87 (2018), 581--586.

\bibitem{GJ2015} G. Gonz\'alez-Diez and A. Jaikin-Zapirain, The
    absolute Galois group acts faithfully on regular dessins
    and on Beauville surfaces, Proc. London Math. Soc.
    111 (2015), 775--796.

\bibitem{Gro1997} A. Grothendieck, Esquisse d'un programme. in:
    L. Schneps and P. Lochak (eds.), Geometric Galois actions
    1. The inverse Galois problem, Moduli Spaces and Mapping
    Class Groups, London Math. Soc. Lecture Notes Ser. 242,
    Cambridge Univ. Press, Cambridge, 5--48 (1997).

\bibitem{Huppert1953-1} B. Huppert, \"Uber das Produkt von
    paarweise vertauschbaren zyklischen Gruppen, Math.
    Z. 58 (1953) 243--264.

\bibitem{Huppert1953} B. Huppert, \"Uber die Aufl\"osbarkeit
    faktorisierbarer Gruppen, Math. Z. 59 (1953),
    1--7.
\bibitem{Huppert1967} B. Huppert, Endliche Gruppen I, Springer,
    Berlin, 1967.

\bibitem{Ito1955} N. It\^o, \"Uber das Produkt von zwei
    abelschen Gruppen, Math. Z. 62 (1955), 400--201.

\bibitem{JN2015} R. Jajcay and R. Nedela, Half-Regular Cayley
    Maps, Graphs Combin. 31 (2015) 1003--1018.

\bibitem{JS2002} R. Jajcay and J. \v{S}ir\'{a}\v{n}, Skew
    morphisms of regular Cayley maps, Discrete Math.
    224 (2002), 167--179.

\bibitem{Johnson}D.L. Johnson, Presentations of groups, 2nd edn., Cambridge University Press, 1997.

\bibitem{Janko2008}Z.~Janko, Finite 2-groups with exactly one nonmetacyclic maximal subgroup,
   Israel J.~Math.  166 (2008), 313--347.

\bibitem{Jones2010} G.A. Jones, Regular embeddings of complete
    bipartite graphs: classification and enumeration, Proc.
    Lond. Math. Soc. 101 (2010), 427--453.

\bibitem{JNS2007} G.A. Jones, R. Nedela and M. \v Skoviera,
    Regular embeddings of $K_{n,n}$ where $n$ is an odd prime
    power, European J. Combin. 28 (2007), 1863--1875.

\bibitem{JNS2008} G.A. Jones, R. Nedela and M. \v{S}koviera,
    Complete bipartite graphs with a unique regular embedding,
    J. Combin. Theory Ser. B 98 (2008), 241--248.

\bibitem{JS1996} G.A. Jones and D. Singerman, Bely\v{\i}
    functions, hypermaps and Galois groups, Bull. London Math.
    Soc. 28 (1996), 561--590.

\bibitem{JSW2007} G.A. Jones, M. Streit and J. Wolfart, Galois
    action on families of generalised Fermat curves, J. Algebra
    307 (2007), 829--840.

\bibitem{JW2016} G.A. Jones and J. Wolfart, Dessins d'Enfants on
    Riemann Surfaces, Springer Monographs in Mathematics.
    Springer, Heidelberg, 2016.

\bibitem{KK2016} I. Kov\'acs and Y. S. Kwon, Regular Cayley maps
    on dihedral groups with smallest kernel, J. Algebraic
    Combin. \textbf{44} (2016), 831--847.

\bibitem{KK2017}I. Kov\'acs and Y.S. Kwon, Classification of
reflexible Cayley maps for dihedral
groups, Journal of Combinatorial Theory, Series B 127(2017) 187--204.

\bibitem{KMM2013} I. Kov\'acs, D. Maru\v{s}i\v{c} and M. E.
   Muzychuk, On $G$-arc-regular dihedrants and regular
   dihedral maps, J. Algebraic Combin. \textbf{38} (2013),
  437--455.

\bibitem{KN2011} I. Kov\'acs and R. Nedela, Decomposition of
    skew morphisms of cyclic groups, Ars Math. Contemp.
    4 (2011), 329--249.

\bibitem{KN2017} I. Kov\'acs and R. Nedela, Skew-morphisms of
    cyclic $p$-groups, J. Group Theory 20 (2017),
    1135--1154.

\bibitem{KK2005} J.H. Kwak and Y. S. Kwon,  Regular orientable
    embeddings of complete bipartite graphs, J. Graph Theory
    50 (2005), 105--122.
\bibitem{KKF2006}J.H. Kwak, Y.S. Kwon and R. Feng,
A classification of regular $t$-balanced Cayley
maps on dihedral groups, European Journal of
Combinatorics 27.3 (2006)382--393.

\bibitem{KWON2013} Y.S. Kwon, A classification of regular
    $t$-balanced Cayley maps for cyclic groups, Discrete Math.
    313 (2013), 656--664.

\bibitem{LZ2004} S.K. Lando and A.K. Zvonkin, Graphs on Surfaces
    and their Applications, Encyclopaedia Math. Sci. 141,
    Springer-Verlag, Berlin, 2004.

\bibitem{La1972} S. Lang, The Fermat Curve. In: Introduction to
    Algebraic and Abelian Functions. Graduate Texts in
    Mathematics, vol 89. Springer, New York, 1972.

\bibitem{NSZ} R. Nedela, M. {\v S}koviera and A. Zlato{\v s},
    Regular embeddings of complete bipartite graphs,
    Discrete Math. 258  (2002), 379--381.

\bibitem{NP2017} R. Nedela and C. Pomerance, Density of
    singular pairs of integers, preprint.

\bibitem{Oh2009}J.M. Oh, Regular $t$-balanced Cayley maps on
semi-dihedral groups, J. Combin. Theory Ser. B 99 (2009) 480--493.

\bibitem{Robinson1996}D.J.S. Robinson, A Course in the Theory of Groups,
Springer, New York, 1996.
\bibitem{HWYZ2017} N.E. Wang, K. Hu,  K. Yuan and J.-Y. Zhang,
    Smooth skew-morphisms of the dihedral groups, preprint
    (2017).

\bibitem{Zhang2015} J.-Y. Zhang, A classification of regular
    Cayley maps with trivial Cayley-core for dihedral groups,
    Discrete Math. 338 (2015), 1216--1225.

\bibitem{Zhang20152} J.-Y. Zhang, Regular Cayley maps of
    skew-type $3$ for dihedral groups, Discrete Math.
    388 (2015), 1163--1172.

\bibitem{ZD2016} J.-Y. Zhang and S.-F. Du,  On the
    skew-morphisms of dihedral groups, J. Group Theory
    19 (2016), 993--1016.

\end{thebibliography}
\end{document}